\newcommand{\losemi}{{\otimes \kern -.78em \ltimes}}
\newcommand{\rosemi}{{\otimes \kern -.78em \rtimes}}
\newcommand{\Hom}{\ensuremath{\operatorname{Hom}}}
\newcommand{\Ext}{\operatorname{Ext}}
\newcommand{\ind}{\operatorname{ind}}
\renewcommand{\a}{\alpha}
\newcommand{\ga}{\gamma}
\newcommand{\s}{\sigma}
\newcommand{\la}{\lambda}
\newcommand{\Mod}{\operatorname{Mod}}
\newcommand{\St}{\operatorname{St}}
\newcommand{\opH}{\operatorname{H}}
\newcommand{\hQ}{\widehat{Q}}
\newcommand{\hZ}{\widehat{Z}}
\newcommand{\leqnomode}{\tagsleft@true}
\newcommand{\reqnomode}{\tagsleft@false}
\newtheorem{theorem}{Theorem}[subsection]
\let\c@fact\c@theorem\makeatother
\let\c@note\c@theorem\makeatother
\let\c@lemma\c@theorem\makeatother
\let\c@lemma\c@theorem\makeatother
\newtheorem{quest}{Question}[subsection]
\let\c@quest\c@theorem\makeatother
\newtheorem{prop}{Proposition}[subsection]
\let\c@prop\c@theorem\makeatother
\newtheorem{conj}{Conjecture}[subsection]
\let\c@conj\c@theorem\makeatother
\let\c@cor\c@theorem\makeatother
\let\c@defn\c@theorem\makeatother
\theoremstyle{definition}
\let\c@remark\c@theorem\makeatother
\let\c@example\c@theorem\makeatother
\numberwithin{equation}{subsection}
\crefname{theorem}{Theorem}{Theorems}
\crefname{fact}{Fact}{Facts}
\crefname{note}{Note}{Notes}
\crefname{lemma}{Lemma}{Lemmas}
\crefname{alg}{Algorithm}{Algorithms}
\crefname{remark}{Remark}{Remarks}
\crefname{example}{Example}{Examples}
\crefname{prop}{Proposition}{Propositions}
\crefname{conj}{Conjecture}{Conjectures}
\crefname{cor}{Corollary}{Corollaries}
\crefname{defn}{Definition}{Definitions}
\crefname{equation}{\!\!}{\!\!} 
\newcounter{listequation}
\begin{document}

\title{On Donkin's Tilting Module Conjecture III: New Generic Lower Bounds}

\dedicatory{In memory of Georgia M. Benkart.}

\begin{abstract}  In this paper the authors consider four questions of primary interest for the representation theory of reductive algebraic groups: 
(i) Donkin’s Tilting Module Conjecture, (ii) the Humphreys-Verma Question, (iii) whether $\text{St}_r \otimes L(\lambda)$ is a tilting module for
$L(\lambda)$ an irrreducible representation of $p^{r}$-restricted highest weight, and (iv) whether $\text{Ext}^{1}_{G_{1}}(L(\lambda),L(\mu))^{(-1)}$ is a tilting module where $L(\lambda)$ and $L(\mu)$ have $p$-restricted highest weight.   

The authors establish affirmative answers to each of these questions with a new uniform bound, namely $p\geq 2h-4$ where $h$ is the Coxeter number. 
Notably, this verifies these statements for infinitely many more cases. Later in the paper, questions (i)-(iv) are considered for rank two groups where there are counterexamples (for small primes) to these questions. 
\end{abstract}

\author{\sc Christopher P. Bendel}
\address
{Department of Mathematics, Statistics and Computer Science\\
University of
Wisconsin-Stout \\
Menomonie\\ WI~54751, USA}
\thanks{Research of the first author was supported in part by Simons Foundation Collaboration Grant 317062}
\email{bendelc@uwstout.edu}

\author{\sc Daniel K. Nakano}
\address
{Department of Mathematics\\ University of Georgia \\
Athens\\ GA~30602, USA}
\thanks{Research of the second author was supported in part by
NSF grants DMS-1701768 and DMS-2101941}
\email{nakano@math.uga.edu}

\author{\sc Cornelius Pillen}
\address{Department of Mathematics and Statistics \\ University
of South
Alabama\\
Mobile\\ AL~36688, USA}
\email{pillen@southalabama.edu}

\author{Paul Sobaje}
\address{Department of Mathematical Sciences \\
          Georgia Southern University\\
          Statesboro, GA~30458, USA}
\email{psobaje@georgiasouthern.edu}

\maketitle
\section{Introduction}

\subsection{} For modular representations of Lie algebras that arise from an algebraic group, an important problem has been to determine when representations lift to the ambient algebraic group. 
In 1960, Curtis \cite{C60} showed that when $G$ is a simple simply connected algebraic group over an algebraically closed field $k$ of positive characteristic $p$, the simple restricted representations for 
${\mathfrak g}=\text{Lie }G$ lift uniquely to $G$. This result was central to the development of the theory of reductive algebraic groups, because Steinberg \cite{St63} was able to prove that 
irreducible rational $G$-modules can be constructed via twisted tensor products of simple ${\mathfrak g}$-modules (lifted to $G$). 

Restricted representations for the Lie algebra ${\mathfrak g}$ are equivalent to representations for the first Frobenius kernel $G_{1}$, and one can employ higher Frobenius kernels $G_{r}$ in the 
study of representations for $G$. The modules for $G_{r}$ coincide with modules for a finite-dimensional cocommutative Hopf algebra $\text{Dist}(G_{r})$, and one can consider the projective covers 
(equivalently, injective hulls) of simple $G_{r}$-modules. The Humphreys-Verma Question ([HV-Quest]) (see \cite[10.4 Question]{Hum06}), posed in 1973, asked whether the $G_r$-structure on such a projective module lifts to a $G$-structure. 
Ballard \cite{B78} provided an affirmative answer to this question for $p\geq 3h - 3$, and shortly thereafter Jantzen \cite{Jan80} lowered the bound to $p \geq 2h-2$. 

In 1990, Donkin \cite{Don93} proposed his famous Tilting Module Conjecture ([DTilt]) which states 
that such structures should arise from tilting modules for $G$.  This was similarly shown to hold for $p \geq 2h-2$. For many years, people believed that [DTilt] should hold for all $p$. 
The authors \cite{BNPS20} discovered the first counterexample to Donkin's Tilting Module Conjecture in 2019 and have subsequently produced counterexamples in the root systems $\rm{B}_{n}$ ($n\geq 3$), $\rm{C}_{n}$ ($n\geq 3$), $\rm{D}_{n}$ ($n\geq 4$), $\rm{E}_{6}$, $\rm{E}_{7}$, $\rm{E}_{8}$, $\rm{F}_{4}$, and $\rm{G}_{2}$ (see \cite{BNPS22b}). For types $\rm{A}_{n}$ ($n=1,2,3$) and $\rm{B}_{2}$, [DTilt] holds for all primes. No counterexamples to [DTilt] have been found in type $\rm{A}_n$. The methods employed for constructing counterexamples involved using the structure of $\text{Ext}^{1}_{G_{1}}(L(\lambda),L(\mu))^{(-1)}$ for $p$-restricted weights $\lambda$ and $\mu$ and using the fact that [DTilt] implies that these cohomology groups must embed in a tilting module. The fact that $\text{Ext}^{1}_{G_{1}}(L(\lambda),L(\mu))^{(-1)}$ is a tilting module was shown by Andersen \cite{And84} for $p\geq 3h - 3$. This bound was subsequently lowered to $2h-2$ by Bendel, Nakano and Pillen \cite{BNP04}. 

Donkin also in 1990 proposed another conjecture that was based on a question posed by Jantzen in 1980 \cite{Jan80}. Donkin's $(p,r)$-Filtration Conjecture ([DFilt $\Leftrightarrow$]) provides a necessary and sufficient condition for a 
rational $G$-module to admit a good $(p,r)$-filtration. Much of the recent progress made by the authors in verifying and disproving the Tilting Module Conjecture in specific cases relied heavily on results by 
Kildetoft and Nakano, and Sobaje who made important connections between [DTilt] and good $(p,r)$-filtrations on $G$-modules (cf. \cite{KN15}, \cite{So18}). For a detailed description of these interrelationships, the reader is referred to Section~\ref{S:conjquest}.

\subsection{}\label{S:main} The main goals of the paper are (i) to make comparisons between the aforementioned questions and (ii) to establish a new uniform bound, namely $p\geq 2h-4$ for the 
validity of the problems (a)-(d) in Theorem \ref{T:main} below.  For parts (a) and (b), the bound $p\geq 2h-2$ was the best bound available for over 30 years, and for part (d) this was the best known bound for the past 15 years. 
The bound for part (c) was lowered to $2h-4$ recently in \cite{BNPS19}. In this paper, we note that our results allows us to verify (a), (b), and (d) for infinitely many more cases. Our results also provide some hope that the bounds can be further improved to perhaps $p>h$. 

\begin{theorem}\label{T:main} Let $G$ be a simple algebraic group over an algebraically closed field of characteristic $p>0$ and $h$ be the Coxeter number associated to the root system for $G$. Then the following hold for $p\geq 2h-4$: 
\begin{itemize} 
\item[(a)]  Donkin's Tilting Module Conjecture [DTilt],
\item[(b)]  An affirmative answer to the Humpheys-Verma Question [HV-Quest],
\item[(c)] $\operatorname{St}_{r}\otimes L(\lambda)$ is a tilting module for all $\lambda\in X_{r}$ (i.e., [DFilt $\Rightarrow$]),
\item[(d)] $\operatorname{Ext}^{1}_{G_{1}}(L(\lambda),L(\mu))^{(-1)}$ is a tilting $G$-module for all $\lambda,\mu\in X_{1}$. 
\end{itemize} 
\end{theorem}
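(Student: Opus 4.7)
The plan is to bootstrap from statement (c), which has already been proved at the bound $p \geq 2h-4$ in \cite{BNPS19}, and to deduce statements (a), (b), and (d) at the same bound by running through the chain of implications that ties the four conjectures together (building on Kildetoft--Nakano \cite{KN15} and Sobaje \cite{So18}, as to be detailed in the following section of the paper). The common engine is the fact that the tilting property of $\St_r \otimes L(\lambda)$ imposes rigid structural constraints on the injective hulls $\widehat{Q}_r(\lambda)$ of simple $G_r$-modules.

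First I would derive (a) from (c). The Kildetoft--Nakano/Sobaje machinery shows that if $\St_r \otimes L(\lambda)$ is tilting for every $\lambda \in X_r$, then $\widehat{Q}_r(\lambda)$ occurs as a direct summand of the $G_r$-restriction of a tilting $G$-module. A standard highest-weight argument, together with self-duality of tilting modules, identifies this summand with the restriction of $T(2(p^r-1)\rho + w_0\lambda)$ to $G_r$, which is precisely Donkin's Tilting Module Conjecture. Statement (b) is then an immediate corollary, since the $G$-tilting module $T(2(p^r-1)\rho + w_0\lambda)$ itself furnishes the requested lift of the $G_r$-projective cover of $L(\lambda)$.

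For (d), I would invoke the Andersen \cite{And84}/Bendel--Nakano--Pillen \cite{BNP04} construction of an embedding $\Ext^1_{G_1}(L(\lambda), L(\mu))^{(-1)} \hookrightarrow T$ into a specific tilting module $T$, whose essential input is that $\St_1 \otimes L(\nu)$ be tilting for all $\nu \in X_1$, i.e.\ part (c) at $r = 1$. The remaining step is to show that this embedding splits, which I would achieve by using (a) to access the good $(p,1)$-filtration of $T$ and then matching composition-factor multiplicities against the Andersen sum formula to force the image to be a summand.

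The main obstacle is expected to be this final step of (d): the classical $p \geq 2h-2$ argument relies on cohomological vanishing in a range that just fails at $p = 2h-3, 2h-4$, and what rescues the argument at the new bound is the simultaneous availability of (a), (b), and (c), which together supply the missing vanishing through good $(p,1)$-filtration identities rather than by direct cohomological computation. A secondary obstacle is to verify that the implication (c) $\Rightarrow$ (a) goes through cleanly under the weakened hypothesis $p \geq 2h-4$, since the published versions of this implication were originally formulated under the stronger assumption $p \geq 2h-2$; handling this presumably requires re-examining the Kildetoft--Nakano/Sobaje reduction one step more carefully than before.
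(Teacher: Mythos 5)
Your central step --- deducing (a) from (c) --- does not go through, and in fact runs against the known logical structure of these conjectures. The implication established in the literature is the reverse one: [DTilt] implies [DFilt $\Rightarrow$] (Kildetoft--Nakano), while to get from [DFilt $\Rightarrow$] back to [DTilt] one needs the additional input of an affirmative answer to [J-Quest($\dagger$)] (this is \cite[Theorem 4.4.1]{BNPS22a}), and the bounds currently known for that question ($p\geq h(h-2)$ unconditionally, or $p\geq 2h-2$ assuming the Lusztig character formula) are no better than what you are trying to improve. Your ``standard highest-weight argument, together with self-duality'' cannot identify $\hQ_r(\lambda)$ with the full restriction of $T(2(p^r-1)\rho+w_0\lambda)$: the summand statement is automatic for every prime, and the entire content of [DTilt] is that it is the \emph{only} summand. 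Indeed the rank-two analysis in this very paper shows the implication you propose is false as stated: $\St_r\otimes L(\lambda)$ is tilting for all $\lambda\in X_r$ in type $G_2$ with $p=2$, yet [DTilt] fails there. The paper instead proves (a) directly: for $p\geq 2h-2$ it cites Jantzen, for $p=2h-3$ it sets up a minimal-counterexample argument (Levi reduction of the multiplicities $a^{\lambda}_{\mu}$, $b^{\lambda}_{\mu}$, forcing $\langle\mu,\alpha_0^{\vee}\rangle<\langle\lambda,\alpha_0^{\vee}\rangle$), proves that $T((p-1)\rho+\lambda)$ is $G_1T$-indecomposable whenever $\langle\lambda,\alpha_0^{\vee}\rangle\leq p(h-2)$, and then reaches the interior of the lowest alcove by translation; the residual case $p=2h-4$ is only type $A_2$, $p=2$, which is known. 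Part (b) does follow from (a) exactly as you say, and part (c) at $p\geq 2h-4$ is indeed just the citation to \cite{BNPS19}.

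Your plan for (d) is also incomplete at its decisive point. Exhibiting an embedding of $\Ext^1_{G_1}(L(\lambda),L(\mu))^{(-1)}$ into a tilting module and then ``matching composition-factor multiplicities against the Andersen sum formula to force the image to be a summand'' is not an argument: a submodule of a tilting module with prescribed multiplicities need not split off, and you give no mechanism that produces the splitting at $p=2h-3$ or $2h-4$. The paper's route is different and self-contained: combining Andersen's inequality $p\gamma\leq -w_0\lambda+\mu+\alpha$ (with Jantzen's computation of $\opH^1(G_1,H^0(\omega))$ replacing the $p>h$ input) with the inequality from \cite{BNP04} yields $\langle\gamma,\alpha_0^{\vee}\rangle\leq h-2$ for every dominant weight $\gamma$ of the Ext-group (outside type $A_1$). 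For $p\geq 2h-3$ this places every composition factor $L(\gamma)$ in the closure of the fundamental alcove, where $L(\gamma)=\nabla(\gamma)=\Delta(\gamma)=T(\gamma)$ and there are no extensions among such factors, so the Ext-group is completely reducible and tilting; the $A_2$, $p=2$ case is handled by Yehia's explicit computations. Without either this weight bound or a genuine splitting argument, your proof of (d) has a gap, and without repairing the direction of the implication between (c) and (a), the proof of (a) (and hence your derivation of (b)) does as well.
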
 

For comparison, we also analyze the rank 2 cases in finer detail in an effort to gain further insight into these conjectures and their connections. 

\begin{theorem}\label{T:main2} Let $G$ be a simple algebraic group over an algebraically closed field of characteristic $p>0$ with underlying root system of rank $2$. Then 
\begin{itemize} 
\item[(a)]  Donkin's Tilting Module Conjecture [DTilt] holds for all $p$ except when $\Phi=G_{2}$ and $p=2$. 
\item[(b)]  The Humphreys-Verma Question [HW-Quest] has a positive answer for all $p$ except possibly when $\Phi=G_{2}$ and $p=2$. 
\item[(c)] $\operatorname{St}_{r}\otimes L(\lambda)$ is a tilting module for all $\lambda\in X_{r}$ (i.e., [DFilt $\Rightarrow$] holds) for all $p$.
\item[(d)] $\operatorname{Ext}^{1}_{G_{1}}(L(\lambda),L(\mu))^{(-1)}$ is a tilting $G$-module for all $\lambda,\mu\in X_{1}$ for all $p$ except when $\Phi=G_{2}$ and $p=2$. 
\end{itemize} 
\end{theorem}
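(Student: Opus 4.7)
The plan is to split into the three rank~2 cases $A_2$, $B_2$, $G_2$ and combine Theorem~\ref{T:main} with prior results plus small-prime computations. Writing $h=3,4,6$ respectively, Theorem~\ref{T:main} already settles (a)--(d) for every prime when $\Phi=A_2$, for every prime $p\geq 5$ when $\Phi=B_2$, and for every prime $p\geq 11$ when $\Phi=G_2$. The remaining pairs to examine are $(B_2,p=2,3)$ and $(G_2,p=2,3,5,7)$.

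For $B_2$ at $p=2,3$, part (a) is already known from the results recalled in the introduction, and parts (b), (c), (d) then follow from (a) together with the implications among [DTilt], [HV-Quest], [DFilt $\Rightarrow$], and the tilting property of the $G_1$-extension group compiled in Section~\ref{S:conjquest}, provided the auxiliary hypotheses hold. These hypotheses can be verified by inspecting the finitely many indecomposable tilting modules $T(\la)$ with $\la \in (p^r-1)\rho + X(T)_+$ in the relevant alcoves, a bookkeeping exercise which for rank~2 is tractable.

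The bulk of the argument concerns $G_2$. For $p=3,5,7$, I would verify [DTilt] directly by identifying the indecomposable tilting $G$-modules whose $G_1$-socles carry the conjectured restricted component, using the weight combinatorics of $G_2$ and the $\operatorname{Ext}^{1}_{G_1}$-tables developed in the authors' earlier work \cite{BNPS20, BNPS22b}; parts (b), (c), (d) then follow from (a) via the implications assembled in Section~\ref{S:conjquest}. For $p=2$, [DTilt] is known to fail, giving the exception in (a); the same $\operatorname{Ext}^{1}_{G_1}(L(\la),L(\mu))^{(-1)}$ computations that produced the counterexample to [DTilt] also obstruct (b) and (d), so the exceptions listed in the statement match the obstructions coming from a single source.

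The main obstacle will be establishing (c) at $(G_2,p=2)$, where one needs $\operatorname{St}_r\otimes L(\la)$ to be tilting despite the failure of [DTilt]. My plan is to decompose $\operatorname{St}_1\otimes L(\la)$ for each $\la\in X_1$ explicitly, show every summand is an indecomposable tilting module of the form $T((p-1)\rho + w_0\la)$, and then extend to arbitrary $r$ via the Steinberg tensor product theorem and the preservation of tilting modules under Frobenius twists. A careful alcove analysis at $p=2$ in $G_2$ should confirm that all summands lie in the closure of the fundamental alcove, forcing them to be tilting and thereby keeping (c) intact despite the breakdown of the surrounding conjectures.
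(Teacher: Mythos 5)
Your overall strategy (invoke Theorem~\ref{T:main} for large $p$, then treat the finitely many remaining primes in types $B_2$ and $G_2$) matches the paper's only for part (d); elsewhere there are genuine gaps. The most serious is your repeated claim that part (d) follows from part (a) via the implications assembled in Section~\ref{S:conjquest}. It does not: [DTilt] only implies that $\Ext^1_{G_1}(L(\la),L(\mu))^{(-1)}$ \emph{embeds} in a tilting module (this is precisely how the counterexamples of \cite{BNPS22b} were detected), not that it \emph{is} one, and none of the implications (1)--(4) in the diagram has (d) as its target. The paper therefore proves (d) separately (Theorem~\ref{T:ExtRank2}): it uses Proposition~\ref{P:bound} to bound the dominant weights $\ga$ of the Ext-group by $\langle\ga,\a_0^{\vee}\rangle\leq h-2$, and then checks case by case against the computations and tables of Sin \cite{Sin94}, L\"ubeck \cite{L}, Humphreys \cite{Hum06}, Lin \cite{Lin}, Yehia \cite{Y82} and \cite[Remark 8.10]{BNPS22a} that every composition factor occurring is a simple tilting module with no extensions among such factors. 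Your proposal contains no substitute for this step, so (d) at the small primes is unproved.

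Second, your mechanism for (c) at $(G_2,p=2)$ cannot work as stated: the summands of $\St_1\otimes L(\la)$ have highest weights as large as $2(p-1)\rho$, and for $p=2<h=6$ the closure of the fundamental alcove contains no nonzero dominant weights at all, so ``all summands lie in the closure of the fundamental alcove'' is false. The paper instead cites Kildetoft--Nakano \cite[Section 8]{KN15}, who verified [DFilt $\Rightarrow$] in rank $2$ for all $p$ except $(G_2,p=7)$, with that last case settled in \cite[Theorem 4.4.1]{BNPS19}. Two smaller points: for $G_2$ at $p=3,5,7$ the paper does not re-derive [DTilt] from socles of tilting modules but simply cites \cite[Theorem 1.1.1]{BNPS22a} (redoing this is a paper-length computation, not a step in a proof); and at $(G_2,p=2)$ the failure of [DTilt] does not ``obstruct'' (b) --- the Humphreys--Verma question remains \emph{open} there, which is exactly why the statement reads ``except possibly''.
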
 

Even though there is a counterexample to Donkin's TMC for $\Phi=G_{2}$ $p=2$, the Humphreys-Verma Question still remains open in this case. 

\subsection{} The paper is organized as follows. In Section~\ref{S:prelim}, the conventions for the paper are established. Furthermore, a complete description of the problems stated in parts (a)-(d) of 
Theorem~\ref{T:main} is presented with an up-to-date summary of the known connections. The following section (Section~\ref{S:bound}) focuses on lowering the bound for the Tilting Module Conjecture. 
Section~\ref{S:Ext} establishes the lower bound for part (d) of Theorem~\ref{T:main}. Our first main theorem (Theorem~\ref{T:main}) can then be deduced from the results in \cite{BNPS20}, along with work in 
Sections \ref{S:prelim} and \ref{S:Ext}. Finally, in Section~\ref{S:Rank2}, we provide an analysis of the rank two cases that entails gathering known results from the literature and using prior work in 
\cite{BNPS22a}. 


\section{Preliminaries}\label{S:prelim}

\subsection{Notation}\label{S:notation} In this paper we will generally follow the standard conventions in \cite{rags}. Throughout this paper $k$ is an algebraically closed field of characteristic $p>0$. Let
\vskip .25cm 
\begin{itemize}
\item[(1)] $G$ be a connected semisimple algebraic group scheme defined over ${\mathbb F}_{p}$.
\item[(2)] $T$ be a fixed split maximal torus in $G$. 
\item[(3)] $\Phi$ be the root system associated to $(G,T)$. 
\item[(4)] $\Phi^{\pm}$ be the set of positive (resp. negative) roots. 
\item[(5)] $\Delta=\{\alpha_1,\dots,\alpha_{l}\}$ be the set of simple roots determined by $\Phi^+$. 
\item[(6)] $B$ be the Borel subgroup given by the set of negative roots, $U$ be the unipotent radical of $B$. 
\item[(7)] More generally, if $J\subseteq \Delta$, let $P_{J}$ be the
parabolic subgroup relative to $-J$, $L_{J}$ be the Levi factor of $P_{J}$ and $U_{J}$ be the
unipotent radical.
\item[(8)]  $\Phi_{J}$ be the root subsystem in $\Phi$ generated by the simple roots in $J$, 
with positive subset $\Phi_{J}^{+} = \Phi_{J}\cap\Phi^{+}$. 
\item[(9)] $W$ be the Weyl group associated with $\Phi$. For any $J\subseteq \Delta$, let $W_{J}$ be the subgroup of $W$ generated by reflections corresponding to simple 
roots in $J$.
\item[(10)]  $w_0$ (resp. $w_{J,0}$) denote the longest word of $W$ (resp. $W_{J}$, for $J \subseteq \Delta$). 
\item[(11)] $\rho$ be the half-sum of positive roots (which is also the sum of the fundamental weights). More generally, let $\rho_J$ be the half-sum of all the roots spanned by $J$.
\item[(12)] $\alpha_0$ be the highest short root, with associated coroot $\alpha_0^{\vee}$.
\item[(13)] $h$ denote the Coxeter number for the root system associated to $G$, i.e., $h = \langle\rho,\alpha_0^{\vee}\rangle + 1$.
\item[(14)] $X:=X(T)$ be the integral weight lattice spanned by the fundamental weights $\{\omega_1,\dots,\omega_l\}$. 
Moreover, for $J\subseteq \Delta$, let $X_{J}^{+}$ be the weights that are dominant on $J$ and $(X_J)_r$ denote the weights that are $p^r$-restricted on $J$. 
\item[(15)] $X^{+}$ denote the dominant weights for $G$. 
\item[(16)]  $X_{r}$ be the $p^{r}$-restricted weights. 
\item[(17)] $\leq$ be the order relation defined on $X$ via $\mu \leq \la$ iff $\la - \mu = \sum_{\a \in \Delta}n_{\a}\a$ for $n_{\a} \in {\mathbb Z}_{\geq 0}$.
\item[(18)] $\uparrow$ be the refined order relation on $X$ (associated to the Strong Linkage Principle) as defined in \cite[\S II.6.4]{rags}.
\end{itemize} 
\noindent 
For $\lambda\in X^{+}$, there are four fundamental families of finite-dimensional rational $G$-modules: 
\vskip .15cm 
$\bullet$ $L(\lambda)$ (simple), 
\vskip .15cm
$\bullet$ $\nabla(\lambda)$ (costandard/induced), 
\vskip .15cm 
$\bullet$
$\Delta(\lambda)$ (standard/Weyl), 
\vskip .15cm 
$\bullet$ 
$T(\lambda)$ (indecomposable tilting).
\vskip .25cm 
Let $F^{r}:G\rightarrow G$ be the $r$th iteration of the  Frobenius morphism, and $G_{r}$ be the scheme theoretic kernel of this map which is often called the 
$r$th Frobenius kernel. Set $G_{r}T=(F^{r})^{-1}(T)$. The category of $G_{r}$-modules is equivalent to the category of modules for a finite-dimensional cocommutative Hopf algebra. 
Let $Q_{r}(\lambda)$ denote the projective cover (equivalently, injective hull) of $L(\lambda)$ as a $G_{r}$-module, $\lambda\in X_{r}$.  For $\la \in X_r$, we will use the 
notation $\widehat{Q}_{r}(\lambda)$ to denote the lift of $Q_{r}(\lambda)$ to $G_{r}T$. Let  $\text{St}_r = L((p^r-1)\rho)$ be the $r$th Steinberg module which is a simple $G$-module and 
also projective/injective when restricted to $G_{r}T$ or $G_{r}$.  One further set of modules will be needed: baby Verma modules.  For $\la \in X$, set $\hZ'(\la) := \ind_{B}^{G_rB}\la$.

Given a Levi subgroup $L_J$ of $G$, each of the fundamental modules discussed above have a version defined over $L_J$ or $(L_J)_r$ as appropriate. Such modules will be denoted with a $J$-subscript. For example, $T_J(\sigma)$ will denote the indecomposable tilting $L_J$-module with highest weight $\sigma$.   

\subsection{Summary of the Questions-Conjectures}  \label{S:conjquest} We provide now a brief description of the conjectures and questions raised in the introduction in chronological order. 
For the reader's convenience, a chart describing the various connections and implications is presented at the end of the section. 
We start with a statement of the Humphreys-Verma Question which is still an open problem for all primes $p>0$. 

\vskip .5cm 
\begin{quest}\label{Q:HV} $\operatorname{\sf [HV-Quest]}$ For $\lambda\in X_{r}$, does $Q_{r}(\lambda)$ admit a structure as a $G$-module? 
\end{quest} 

Jantzen posed the following question in the early 1980s that identifies fine structures within standard/costandard modules. The general question is stated as part (a). 
A weaker version involving specific weights above the restricted region is stated as part (b).  A finite dimensional rational $G$-module $M$ is said to have a {\em good $(p,r)$-filtration} if there exists a filtration
$$
0 = F_0 \subseteq F_1 \subseteq F_2 \subseteq \cdots \subseteq F_n = M,
$$
such that, for $1 \leq i \leq n$, $F_{i}/F_{i-1} \cong L(\la_i)\otimes\nabla(\s_i)^{(r)}$ for weights $\la_i \in X_r$ and $\s_i \in X^+$.
\vskip .5cm 
\begin{quest}\label{Q:JQ} Let $G$ be a semisimple algebraic group over $k$. 
\begin{itemize} 
\item[(a)] $\operatorname{\sf [J-Quest]}$ For $\lambda\in X^{+}$, does $\nabla(\lambda)$ admit a good $(p,r)$-filtration? 
\item[(b)] $\operatorname{\sf [J-Quest (\dagger)]}$ For $\lambda\in X_{r}$, does $\nabla(2(p^r - 1)\rho + w_0\la)$ admit a good $(p,r)$-filtration? 
\end{itemize}
\end{quest} 

The famous Tilting Module Conjecture introduced by Donkin at MSRI  in 1990 states that the Humphreys-Verma Question has a positive 
answer via indecomposable tilting $G$-modules. 
 
\begin{conj}\label{C:TMC} $\operatorname{\sf [DTilt]}$ For all $\lambda\in X_{r}$, 
$$T((p^{r}-1)\rho+\lambda)|_{G_{r}T}\cong\widehat{Q}_{r}((p^{r}-1)\rho+w_{0}\lambda).$$
Equivalently, $T(2(p^{r}-1)\rho+w_{0}\lambda)|_{G_{r}T}\cong \widehat{Q}_{r}(\lambda)$ for all $\lambda\in X_{r}$. 
\end{conj}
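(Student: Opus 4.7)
The plan is first to observe that the two displayed isomorphisms are equivalent via the tilting duality $T(\mu)^{*}\cong T(-w_{0}\mu)$ together with the corresponding duality for $G_{r}T$-injective hulls, so it suffices to prove one of them. Next, I would reduce to the case $r=1$. Given the $p$-adic decomposition $\lambda=\lambda_{0}+p\lambda_{1}+\cdots+p^{r-1}\lambda_{r-1}$ with $\lambda_{i}\in X_{1}$, one uses the tensor-product behavior of tilting modules under Frobenius twists together with the standard multiplicative decomposition of $\widehat{Q}_{r}(\lambda)$ into pieces of the form $\widehat{Q}_{1}(\lambda_{i})^{(i)}$ (conditional on DTilt at lower levels) to bootstrap from $r=1$.

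\textbf{Strategy at $r=1$.} For the isomorphism $T(2(p-1)\rho+w_{0}\lambda)|_{G_{1}T}\cong\widehat{Q}_{1}(\lambda)$, the approach is to verify two things: (i) the restriction of the tilting module to $G_{1}$ is injective, and (ii) its $G_{1}T$-socle is $L(\lambda)$ with multiplicity one. Once both are established, indecomposability of $\widehat{Q}_{1}(\lambda)$ forces the desired isomorphism. For (i), I would use that tilting modules whose highest weight lies sufficiently deep in the dominant chamber are $G_{1}$-projective-injective, and translate this into an explicit condition on $\lambda$. For (ii), a standard character computation using the Weyl character formula together with the known character of $\widehat{Q}_{1}(\lambda)$ (valid under suitable hypotheses on $p$) pins down the socle.

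\textbf{Main obstacle.} The classical bound $p\geq 2h-2$ corresponds precisely to the weights of $T(2(p-1)\rho+w_{0}\lambda)$ lying in a region where (i) and (ii) can be verified by direct alcove arguments; pushing the bound to $p\geq 2h-4$ forces some weights into boundary regions where this direct approach fails, and this is the main obstacle. The plan is to circumvent it by exploiting the connections between DTilt and the other statements in Theorem~\ref{T:main}. By the results of Kildetoft--Nakano \cite{KN15}, Sobaje \cite{So18}, and the authors in \cite{BNPS20}, DTilt is essentially equivalent to the conjunction of parts (c) and (d). Part (c) is already established at $p\geq 2h-4$, so the critical remaining task is part (d): showing $\operatorname{Ext}^{1}_{G_{1}}(L(\lambda),L(\mu))^{(-1)}$ is a tilting $G$-module. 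For this, I would express the relevant Ext group via the standard spectral sequence relating $G_{1}$-cohomology to parabolic cohomology on the flag variety, and then run a careful weight analysis to verify that the weights appearing still lie in a region where the tilting property can be read off. The hardest cases will be a handful of boundary weights that $p\geq 2h-2$ narrowly avoided; these will likely require case-by-case arguments, possibly bootstrapped from the rank-two data analyzed in Section~\ref{S:Rank2}.
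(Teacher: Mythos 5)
Your proposal has a genuine logical gap at its crux. You propose to deduce [DTilt] from parts (c) and (d) of Theorem~\ref{T:main}, asserting that [DTilt] is ``essentially equivalent to the conjunction of (c) and (d).'' No such equivalence is known, and the direction you need is false as stated: the known implication chain (see the diagram in Section~\ref{S:conjquest}) is that [J-Quest($\dagger$)] \emph{together with} [DFilt $\Rightarrow$] implies [DTilt], while part (d) sits on the other side of the arrow --- [DTilt] implies that the groups $\operatorname{Ext}^{1}_{G_{1}}(L(\lambda),L(\mu))^{(-1)}$ embed in tilting modules, which is precisely why failures of (d) produce counterexamples to [DTilt], not the reverse. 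Substituting (d) for [J-Quest($\dagger$)] is not a cosmetic change: [J-Quest($\dagger$)] is a good $(p,r)$-filtration statement about $\nabla(2(p^{r}-1)\rho+w_{0}\lambda)$ and is itself open in the range $2h-4\leq p<2h-2$. So the route through implication (2) of the diagram is not available, and your ``main obstacle'' paragraph does not actually produce a proof of the conjecture in the new range.

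The paper's argument is entirely different and does not pass through Ext groups at all. It uses Sobaje's character criterion: [DTilt] holds iff $a_{\mu}^{\lambda}=b_{\mu}^{\lambda}$, where these are the multiplicities of baby Verma factors $\widehat{Z}_{1}'((p-1)\rho+\mu)$ in $\widehat{Q}_{1}((p-1)\rho+w_{0}\lambda)$ and in $T((p-1)\rho+\lambda)$ respectively (Proposition~\ref{a=b}). One then reduces to Levi subgroups (Proposition~\ref{RedLevi}), so that a minimal counterexample forces $\langle\mu,\alpha_{0}^{\vee}\rangle<\langle\lambda,\alpha_{0}^{\vee}\rangle$; for $p=2h-3$ every proper Levi satisfies Jantzen's bound $p\geq 2h_{J}-2$, so minimality is automatic. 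The key step (Proposition~\ref{p(h-2)}) shows $T((p-1)\rho+\lambda)$ is $G_{1}T$-indecomposable when $\langle\lambda,\alpha_{0}^{\vee}\rangle\leq p(h-2)$ by taking a maximal $\mu$ with $a_{\mu}^{\lambda}\neq b_{\mu}^{\lambda}$, showing the putative extra summand is forced to be a $G$-summand isomorphic to $T((p-1)\rho+\mu)$ --- a contradiction with indecomposability of $T((p-1)\rho+\lambda)$ as a $G$-module. The remaining $\lambda$ (those with $(p-1)\rho+w_{0}\lambda$ interior to the lowest alcove) are handled by translation from the upper wall, and the only instance of $p=2h-4$ is type $A_{2}$ with $p=2$, where [DTilt] is known by Donkin. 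Your classical ``injective restriction plus simple socle'' strategy is exactly the Jantzen approach that the paper states cannot be pushed below $p\geq 2h-2$, and your reduction to $r=1$ is standard but not where the difficulty lies.
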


At the same conference in 1990, Donkin stated another conjecture that encompasses Jantzen's Question, and connects the existence of good $(p,r)$-filtrations with tensoring by the 
$r$-th Steinberg representation. 

\begin{conj}\label{C:TMC} Let $G$ be a semisimple algebraic group over $k$, and $M$ be a rational $G$-module. 
\begin{itemize}
\item[(a)] $\operatorname{\sf [DFilt \Rightarrow]}$ If $M$ has a good $(p,r)$-filtration, then $\St_r\otimes M$ has a good filtration. 
Equivalently, for $\la \in X_r$, $\St_r\otimes L(\la)$ is a tilting module.
\item[(b)] $\operatorname{\sf [DFilt \Leftarrow]}$ If $\St_r\otimes M$ has a good filtration, then $M$ has a good $(p,r)$-filtration. 
\item[(c)] $\operatorname{\sf [DFilt \Leftrightarrow]}$ $M$ has a good $(p,r)$-filtration if and only if $\St_r\otimes M$ has a good filtration. 
\end{itemize} 
\end{conj}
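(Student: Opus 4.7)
My plan is to treat the three parts in order. Since (c) is a formal consequence of (a) and (b), I would focus on (a) first and then address the converse in (b). Throughout, the natural operating hypothesis is $p \geq 2h-4$, aligning with the range in which Theorem~\ref{T:main} verifies [DTilt].

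For part (a), I would begin by establishing the ``equivalently'' reformulation. In the forward direction, if $M$ has a good $(p,r)$-filtration, it suffices to verify the claim on each section $L(\la_i) \otimes \nabla(\s_i)^{(r)}$. Granting that $\St_r \otimes L(\la_i)$ is tilting (hence has a good filtration), the module $\St_r \otimes L(\la_i) \otimes \nabla(\s_i)^{(r)}$ becomes a tensor product of two good-filtration modules, and Mathieu's theorem yields the good filtration of the tensor product. Conversely, since $L(\la)$ for $\la \in X_r$ trivially admits a one-section good $(p,r)$-filtration, the hypothesis gives $\St_r \otimes L(\la)$ a good filtration; self-duality of $\St_r$ together with $L(\la)^{*} \cong L(-w_0\la)$ and $-w_0\la \in X_r$ then produces a Weyl filtration by dualizing, so $\St_r \otimes L(\la)$ is tilting.

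The crux is therefore to show that $\St_r \otimes L(\la)$ is a tilting $G$-module. I would exploit that $(\St_r \otimes L(\la))|_{G_r}$ is projective-injective, inheriting this from $\St_r|_{G_r}$, so it decomposes as $\bigoplus_{\mu} Q_{r}(\mu)^{\oplus m_\mu}$ over $G_r$. Assuming [DTilt] (valid for $p \geq 2h-4$ by Theorem~\ref{T:main}(a)), each $Q_{r}(\mu)$ is the $G_r$-restriction of $T(2(p^r-1)\rho + w_0\mu)$. The $G_r$-decomposition then lifts uniquely to a direct sum of tilting modules over $G$, either via cohomological vanishing $\Ext^{1}_{G}(\Delta(\nu), \St_r \otimes L(\la)) = 0$ derived from the $G_r$-injectivity, or via character-matching combined with the uniqueness of indecomposable tilting summands with prescribed highest weights.

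For part (b), I would build on the framework of Kildetoft--Nakano \cite{KN15} and Sobaje \cite{So18}, which established that [DTilt] implies the biconditional [DFilt $\Leftrightarrow$]. The idea is to read the $(p,r)$-composition factors of $M$ from the character of $\St_r \otimes M$ and the known characters of tilting modules, and then to construct the filtration of $M$ inductively on the highest weight via translation functors. The principal obstacle throughout is the validity of [DTilt] in the range $p \geq 2h-4$, which demands careful analysis of the $G_rT$-structure of tilting modules in the critical weight band just below the classical bound $2h-2$, refining the arguments of Andersen and Jantzen; once that hurdle is cleared, parts (a), (b), and (c) cascade from the reductions above.
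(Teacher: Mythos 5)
This item is a statement of Donkin's $(p,r)$-Filtration Conjecture, not a theorem: the paper offers no proof of it, and none is expected, because the conjecture is open in general and parts of it are known to fail for small primes. Your proposal treats it as something to be proved outright (after quietly inserting the hypothesis $p\geq 2h-4$, which is not in the statement), and this cannot work. The fatal gap is part (b). The known implications recorded in the paper's diagram run \emph{from} [DFilt $\Leftrightarrow$] \emph{to} [DTilt] (via \cite[Theorem 4.4.1]{BNPS22a}) and from [DTilt] to [DFilt $\Rightarrow$] (via \cite[Theorem 9.4.1]{KN15}); there is no implication from [DTilt] back to [DFilt $\Leftarrow$], so your appeal to ``[DTilt] implies the biconditional'' reverses the logic. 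Moreover [J-Quest] is a special case of [DFilt $\Leftarrow$] (take $M=\nabla(\lambda)$, for which $\St_r\otimes\nabla(\lambda)$ always has a good filtration), and the paper points out that [J-Quest] has negative answers for some primes below $h$ and is only known affirmatively for $p\geq h(h-2)$, or for $p\geq 2h-2$ assuming the Lusztig character formula. Your sketch for (b) --- reading off $(p,r)$-sections from characters and building the filtration by translation --- is not a proof: the character of $\St_r\otimes M$ does not detect whether $M$ actually admits a good $(p,r)$-filtration, which is precisely why [DFilt $\Leftarrow$] is hard. Since (c) requires (b), it falls with it.

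Your treatment of part (a) is closer to the mark: the reduction of the filtration statement to the single claim that $\St_r\otimes L(\la)$ is tilting for $\la\in X_r$ (using that a tensor product of good-filtration modules has a good filtration, plus self-duality of $\St_r$) is exactly how the paper frames [DFilt $\Rightarrow$], and the route $G_r$-projectivity plus [DTilt] plus a lifting argument is the substance of implication (3) in the paper's diagram. But even here the ``lifts uniquely to a direct sum of tilting modules over $G$'' step is the entire content of \cite[Theorem 9.4.1]{KN15} and is not a formality; and the paper's actual verification of (a) in the range $p\geq 2h-4$ (Theorem \ref{T:main}(c)) is by citation to \cite{BNPS19}, not by a new argument. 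In short: part (a) under a bound on $p$ is a theorem proved elsewhere, parts (b) and (c) remain conjectural, and no proof of the displayed statement should be supplied.
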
  

Recent progress has been made on connecting these various problems. We present the following diagram which indicates the various implications
between these questions and conjectures.  Symbolically, in the case of the two questions ([J-Quest($\dagger$)] and [HV-Quest]), we mean an {\em affirmative answer} to the question.

\begin{figure}[ht]\label{proj1} 
\setlength{\unitlength}{.5cm}
\begin{center}
\begin{picture}(14,4)
\tiny{\put(-7.0,3.75){\Ovalbox{[DFilt\ $\Leftrightarrow$]}}
\put(-0.5,3.75){\Ovalbox{[J-Quest($\dagger$)] and  [DFilt\ $\Rightarrow$]}}
\put(11.3,3.75){\Ovalbox{[DTilt]}}
\put(10.8,0.0){\Ovalbox{[HV-Quest]}}
\put(17.0,3.75){\Ovalbox{[DFilt\ $\Rightarrow$]}}

\put(-3,3.9){\line(1,0){1}}
\put(-3,3.8){\line(1,0){1}}
\put(-2,3.75){$>$}
\put(-2.7,3){$(1)$}

\put(8.5,3.9){\line(1,0){1}}
\put(8.5,3.8){\line(1,0){1}}
\put(9.5,3.75){$>$}
\put(8.8,3){$(2)$}

\put(15,3.9){\line(1,0){1}}
\put(15,3.8){\line(1,0){1}}
\put(16,3.75){$>$}
\put(15.3,3){$(3)$}

\put(12.45,2.7){\line(0,-1){1}}
\put(12.6,2.7){\line(0,-1){1}}
\put(12.35,1.35){$\vee$}
\put(12.8,2){$(4)$}

}

\end{picture}
\end{center}
\end{figure}

The first implication (1) follows from the fact that the tensor product of two modules with each admitting a  good filtration has a good filtration. The implication (4)  is clear. The implications (2) and (3) are deeper facts: (2) follows by \cite[Theorem 4.4.1]{BNPS22a} and (3) from \cite[Theorem 9.4.1]{KN15}. This picture has evolved over the past 20 years and includes earlier work that can be found in 
\cite{And01} and \cite{So18}. 

\subsection{Bounds on Jantzen's Question} Although this paper mainly focuses on determining new generic lower bounds on [DTilt], [HV-Quest], and [DFilt $\Rightarrow$], 
one should not lose sight of the current status on the bounds on [DFilt $\Leftarrow$] and [J-Quest]. 

First note that an affirmative answer to [J-Quest] is a special case of [DFilt $\Leftarrow$]. Andersen \cite[Theorem 1.1]{And19} proved that
[J-Quest] has an affirmative answer for $p\geq h(h-2)$ (a quadratic bound). Parshall and Scott \cite[Theorem 5.1]{PS15} have shown a linear bound, namely, [J-Quest] has a positive answer for $p\geq 2h-2$ as long as the Lusztig Character Formula holds for 
$G$. There are examples where [J-Quest] has a negative answer for primes less than $h$ (see \cite[Theorem 1.2.1]{BNPS22b}). An interesting problem would be to determine if a positive answer to [J-Quest] holds for $p>h$. 

Almost nothing is known about the general problem for verifying [DFilt $\Rightarrow$]. The negative examples for [JQuest] are clearly counterexamples for [DFilt $\Rightarrow$]. 
An important open question is to determine an effective lower bound on $p$ for the validity of [DFilt $\Rightarrow$]. 


\section{Tilting Module Conjecture: Lowering the Bound}\label{S:bound}

In this section, we lower the bound for the Tilting Module Conjecture to $p\geq 2h-4$. The proof given by Jantzen \cite{Jan80} for $p\geq 2h-2$ cannot be effectively used. One must rely on several of the new ideas developed by the authors in \cite[Section 3]{BNPS22a} to lower the bound.

\subsection{Divisibility by the Steinberg.} We first consider the characters of tilting and injective modules when divided by the 
character of the Steinberg module. Following the notation and the discussion in \cite[\S 3.1]{So20}, define for $\la \in X_1$
$$q(\la)= \text{ch} (\hQ_1((p-1)\rho+w_0 \la))/\chi((p-1)\rho),$$
$$t(\la) =\text{ch} (T((p-1)\rho+\la))/\chi((p-1)\rho).$$
For $\mu \in X^+,$  set $s(\mu) = \sum_{w \in W}e(w\mu)$ (where each distinct $w\mu$ appears only once),
which allows us to define $a_{\mu}^{\la}$ and $b_{\mu}^{\la}$ via 
\begin{equation}
q(\la) = \sum_{\mu \in X^+} a_{\mu}^{\la} s(\mu) 
\end{equation}
\begin{equation} 
t(\la) = \sum_{\mu \in X^+} b_{\mu}^{\la} s(\mu).
\end{equation}
The validity of these definitions is based on the fact that, for $\la \in X_1$, both $T((p-1)\rho +\la)$ and $\hQ_1((p-1)\rho +w_0\la),$ when viewed as $G_1T$-modules, have a filtration with factors of the form $\hZ_1'(\ga)$, with $\ga \in X$. One could equivalently define
$$a_{\mu}^{\la}= [\hQ_1((p-1)\rho +w_0\la):\hZ'_1((p-1)\rho + \mu)]$$
and
$$b_{\mu}^{\la}= [T((p-1)\rho +\la):\hZ'_1((p-1)\rho + \mu)].$$
The following is an immediate consequence of  \cite[Thm. 1.1]{So20}.

\begin{prop}\label{a=b} The Tilting Module Conjecture holds for $G$ if and only if for all $\la \in X_1$ and $\mu \in X^+$ with $\mu- \rho \uparrow \la-\rho, $ $a_{\mu}^{\la} = b_{\mu}^{\la}.$
\end{prop}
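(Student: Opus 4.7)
The plan is to interpret the coefficients $a_{\mu}^{\la}$ and $b_{\mu}^{\la}$ as baby Verma multiplicities in the two sides of $\operatorname{\sf [DTilt]}$, and then to read off the equivalence directly from \cite[Thm.~1.1]{So20}. First I would verify the alternative description
\[
a_{\mu}^{\la}=[\hQ_1((p-1)\rho+w_0\la):\hZ'_1((p-1)\rho+\mu)],\qquad b_{\mu}^{\la}=[T((p-1)\rho+\la):\hZ'_1((p-1)\rho+\mu)],
\]
already recorded in the paragraph preceding the proposition. Both $\hQ_1((p-1)\rho+w_0\la)$ and the restriction $T((p-1)\rho+\la)|_{G_1T}$ admit filtrations by $\hZ'_1(\gamma)$, and the character of $\hZ'_1((p-1)\rho+\mu)$, once divided by $\chi((p-1)\rho)$, produces precisely $s(\mu)$ modulo the $W$-orbit bookkeeping built into the definition of $s$. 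Hence the character identity $q(\la)=t(\la)$ is equivalent, term by term, to $a_{\mu}^{\la}=b_{\mu}^{\la}$ for every $\mu$.

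For the forward implication, if $\operatorname{\sf [DTilt]}$ holds then by definition $T((p-1)\rho+\la)|_{G_1T}\cong \hQ_1((p-1)\rho+w_0\la)$, so their baby Verma multiplicities agree. The reverse implication is the substantive point and follows directly from \cite[Thm.~1.1]{So20}: if the $G_1T$-characters of $T((p-1)\rho+\la)$ and $\hQ_1((p-1)\rho+w_0\la)$ coincide for every $\la\in X_1$, then the module-theoretic identification required by $\operatorname{\sf [DTilt]}$ already follows. The restriction to $\mu-\rho\uparrow\la-\rho$ is justified by the Strong Linkage Principle: only such $\mu$ can contribute a nonzero baby Verma section to either module, so both $a_{\mu}^{\la}$ and $b_{\mu}^{\la}$ vanish outside this range automatically and no comparison is required there.

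The main obstacle is the bookkeeping in the first step, specifically confirming that $\text{ch}(\hZ'_1((p-1)\rho+\mu))/\chi((p-1)\rho)$ lines up with the $s(\mu)$-basis so that coefficient extraction really does return baby Verma multiplicities. Once that normalization is in hand, the proposition reduces to a direct translation of \cite[Thm.~1.1]{So20} into the combinatorial language of the $a_{\mu}^{\la}$ and $b_{\mu}^{\la}$.
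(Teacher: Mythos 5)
Your proposal is correct and follows essentially the same route as the paper, which simply records the proposition as an immediate consequence of \cite[Thm.~1.1]{So20}: the forward direction is the character identity forced by the $G_1T$-isomorphism in $\operatorname{\sf [DTilt]}$, the reverse direction rests on $\hQ_1((p-1)\rho+w_0\la)$ being a $G_1T$-summand of $T((p-1)\rho+\la)$ so that equality of baby Verma multiplicities forces the summand to be everything, and the restriction to $\mu-\rho\uparrow\la-\rho$ is exactly the linkage vanishing recorded in \cite[Thm.~1.1]{So20}. Your additional bookkeeping matching $\operatorname{ch}\hZ'_1((p-1)\rho+\mu)/\chi((p-1)\rho)$ with $s(\mu)$ is the same normalization the paper builds into the definitions of $a_\mu^\la$ and $b_\mu^\la$, so no genuinely new ingredient is involved.
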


\subsection{Reduction to Levi Subgroups} In this section, one can show that the conditions in Proposition \ref{a=b} for [DTilt] has a natural extension to Levi factors. 
The following facts were first observed by Donkin   in \cite[Proposition 2.7]{Don93} and  \cite[Proposition 1.5 (ii)]{Don93}, respectively. For more details, the reader is referred to \cite[Sections 2.5, 2.6]{BNPS22b}.

 Let $L_J$ be a Levi subgroup of $G$ and  $\la \in X_r.$  Then we have an equality of $(L_J)_rT$-modules.
\begin{equation}\label{QLevi}
\hQ_{J,r}((p^r-1)\rho + w_{J,0}\la) = \bigoplus_{\nu \in \mathbb{N}J} \widehat{Q}_r((p^r-1)\rho+w_0 \la)_{(p^r-1)\rho+\la-\nu}.
\end{equation}
The  indecomposable tilting modules behave nicely when restricted to Levi subgroups. More precisely,  for any $\la \in X^+$, one obtains:
\begin{equation}\label{TLevi}
T_J( (p^r-1)\rho +\la)=\bigoplus_{\nu \in \mathbb{N}J} T( (p^r-1)\rho+\la)_{(p^r-1)\rho+\la-\nu}.
\end{equation}

Once again following the set-up in  \cite{So20}, define for  $\la \in (X_{J})_1$ 
$$q_J(\la)= \text{ch} (\hQ_{J,1}((p-1)\rho+w_{J,0} \la))/\chi_J((p-1)\rho),$$ 
$$t_J(\la) =\text{ch} (T_J((p-1)\rho+\la))/\chi_J((p-1)\rho).$$
Furthermore, for $L_J$ and $\mu \in (X_J)_+$,  define $s_J(\mu) = \sum_{w \in W_J}e(w\mu)$.
This yields $(a_{J})_{\mu}^{\la}$ and $(b_{J})_{\mu}^{\la}$ via
\begin{equation}
q_J(\la) = \sum_{\{\mu \in X^+ | \la-\mu \in \mathbb{N}J\}} (a_{J})_{\mu}^{\la} \;s_J(\mu) 
\end{equation}
\begin{equation}
 t_J(\la) = \sum_{\{\mu \in X^+ | \la-\mu \in \mathbb{N}J\}}  (b_{J})_{\mu}^{\la} \; s_J(\mu).
\end{equation} 
Here, the $(L_J)_1T$-modules $T_J((p-1)\rho +\la)$ and $\hQ_{J,1}((p-1)\rho +w_{J,0}\la)$ have filtrations with factors of the form $\hZ'_{J,1}(\ga)$, with $\ga \in X$.  It follows that 
$$(a_J)_{\mu}^{\la}= [\hQ_{J,1}((p-1)\rho +w_{J,0}\la):\hZ'_{J,1}((p-1)\rho + \mu)]$$
and
$$(b_J)_{\mu}^{\la}= [T_J((p-1)\rho +\la):\hZ'_{J,1}((p-1)\rho + \mu)].$$
Clearly,  $$\bigoplus_{\nu \in \mathbb{N}J} \hZ'_1((p-1)\rho+ \mu)_{(p-1)\rho+\la-\nu}
 = \begin{cases} \hZ'_{J,1}((p-1)\rho+ \mu) & \text{ if } \la - \mu \in \mathbb{N}J,\\
 0 & \text{ else. } 
 \end{cases}$$
Equations \eqref{QLevi} and \eqref{TLevi} now give rise to the following proposition.
\begin{prop}\label{RedLevi}
If the Tilting Module Conjecture holds for $L_J$ and $\la - \mu \in \mathbb{N}J$, then
$$a_{\mu}^{\la}=(a_{J})_{\mu}^{\la} \;=(b_{J})_{\mu}^{\la} \;=b_{\mu}^{\la}.$$
\end{prop}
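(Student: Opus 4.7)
The plan is to verify the chain of equalities one at a time, with the middle equality serving as the bridge. The equality $(a_J)_\mu^\la = (b_J)_\mu^\la$ is the most immediate: it follows by applying Proposition \ref{a=b} to the Levi subgroup $L_J$ in place of $G$, using the assumption that [DTilt] holds for $L_J$. Formally one replaces $G$, $X_1$, $X^+$, and $W$ in that proposition by $L_J$, $(X_J)_1$, $X_J^+$, and $W_J$ respectively; the hypothesis $\la - \mu \in \mathbb{N}J$ ensures that $\mu$ lies in the set indexing $q_J(\la)$ and $t_J(\la)$, so the correspondence is meaningful.

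For the outer equalities I would argue symmetrically on the injective and tilting sides. Fix a $G_1T$-filtration of $\hQ_1((p-1)\rho + w_0\la)$ whose successive quotients are baby Verma modules $\hZ'_1((p-1)\rho + \gamma_i)$, as guaranteed by the discussion preceding the definitions of $a_\mu^\la$ and $b_\mu^\la$. Passing to the direct sum of weight spaces indexed by $(p-1)\rho + \la - \nu$ for $\nu \in \mathbb{N}J$ is an exact operation on $T$-modules and is manifestly $(L_J)_1T$-equivariant, so by \eqref{QLevi} it converts the given filtration into an $(L_J)_1T$-filtration of $\hQ_{J,1}((p-1)\rho + w_{J,0}\la)$. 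By the displayed formula immediately preceding the proposition, each factor $\hZ'_1((p-1)\rho + \gamma_i)$ contributes either $\hZ'_{J,1}((p-1)\rho + \gamma_i)$ (when $\la - \gamma_i \in \mathbb{N}J$) or $0$ after this weight truncation. Taking $\gamma_i = \mu$ and invoking $\la - \mu \in \mathbb{N}J$ identifies the multiplicity $a_\mu^\la$ with $(a_J)_\mu^\la$; the identical argument with $T((p-1)\rho + \la)$ in place of $\hQ_1((p-1)\rho + w_0\la)$ and \eqref{TLevi} in place of \eqref{QLevi} yields $b_\mu^\la = (b_J)_\mu^\la$.

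The main technical point---rather than a serious obstacle---is verifying that the weight-space truncation preserves short exact sequences of the filtration and produces $(L_J)_1T$-modules of the expected baby Verma type for $L_J$. The displayed decomposition of $\hZ'_1$ given just before the proposition statement supplies essentially all of the content, and what remains is routine bookkeeping with weight spaces under the inclusion $(L_J)_1T \subset G_1T$. Combining the three identifications in the order above then yields the full chain $a_\mu^\la = (a_J)_\mu^\la = (b_J)_\mu^\la = b_\mu^\la$.
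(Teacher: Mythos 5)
Your argument is correct and is essentially the paper's own: the outer equalities come from truncating a baby Verma ($\hZ'_1$-) filtration to the weight spaces $(p-1)\rho+\la-\nu$, $\nu\in\mathbb{N}J$, and matching factors via \eqref{QLevi}, \eqref{TLevi}, and the displayed decomposition of $\hZ'_1((p-1)\rho+\mu)$, while the middle equality is Proposition~\ref{a=b} applied to $L_J$. The paper leaves exactly this verification implicit, so your write-up simply supplies the intended details.
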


\subsection{Minimal Counterexample}
For a fixed prime $p$ we say that $(G, p)$ is a {\em minimal counterexample for $p$} if the Tilting Module Conjecture does not hold for the pair $(G, p)$ but does hold for all pairs $(L_J, p),$ where $L_J$ denotes the Levi subgroup corresponding to a proper subset $J$ of $\Delta.$ 
\begin{prop}\label{min:count} Let $(G,p)$ be a minimal counterexample for the prime $p.$ If $a_{\mu}^{\la} \neq b_{\mu}^{\la}$ for
$\la \in X_1$ and $\mu \in X^+ $ with $\mu- \rho \uparrow \la-\rho,$
then
$\langle \mu, \alpha_0^{\vee} \rangle <  \langle \la, \alpha_0^{\vee} \rangle.$
\end{prop}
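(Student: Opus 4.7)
The plan is to prove the contrapositive: assuming $\la \in X_1$, $\mu \in X^+$ with $\mu - \rho \uparrow \la - \rho$ and $\langle \mu, \alpha_0^{\vee}\rangle \geq \langle \la, \alpha_0^{\vee}\rangle$, I will show that $a_{\mu}^{\la} = b_{\mu}^{\la}$. The strategy is to exhibit a proper subset $J \subsetneq \Delta$ such that $\la - \mu \in \mathbb{N}J$ and then invoke the minimality hypothesis together with Proposition~\ref{RedLevi}.

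The linkage condition forces $\mu \leq \la$, so I can write $\la - \mu = \sum_{\alpha \in \Delta} n_{\alpha}\alpha$ with integers $n_{\alpha} \geq 0$. Pairing with the coroot $\alpha_0^{\vee}$ and using the assumption on $\alpha_0^{\vee}$-values gives
$$0 \;\geq\; \langle \la - \mu,\, \alpha_0^{\vee}\rangle \;=\; \sum_{\alpha \in \Delta} n_{\alpha}\,\langle \alpha,\, \alpha_0^{\vee}\rangle.$$
Since $\alpha_0$ is the highest short root, it is a dominant weight, so $\langle \alpha_0, \alpha^{\vee}\rangle \geq 0$ for every simple $\alpha$; because $\langle \alpha, \alpha_0^{\vee}\rangle$ is a positive scalar multiple of this quantity (both are positive multiples of $(\alpha,\alpha_0)$), it too is $\geq 0$. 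Each summand in the display is therefore non-negative while their total is non-positive, so every summand must vanish. Hence for every $\alpha \in \Delta$, either $n_{\alpha} = 0$ or $\langle \alpha, \alpha_0^{\vee}\rangle = 0$.

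At least one simple root $\beta \in \Delta$ must satisfy $\langle \beta, \alpha_0^{\vee}\rangle > 0$, for otherwise $\alpha_0$ would pair trivially with every simple coroot and would itself be zero. For such $\beta$ the above dichotomy forces $n_{\beta} = 0$, so $\la - \mu$ lies in $\mathbb{N}J$ for the proper subset $J := \Delta \setminus \{\beta\} \subsetneq \Delta$. Since $(G,p)$ is a minimal counterexample, Donkin's Tilting Module Conjecture holds for the Levi subgroup $L_J$, and Proposition~\ref{RedLevi} then delivers $a_{\mu}^{\la} = b_{\mu}^{\la}$, contradicting the hypothesis.

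The argument is short and essentially a sign analysis in the pairing with $\alpha_0^{\vee}$; the substantive content has already been packaged in Proposition~\ref{RedLevi}. I do not anticipate a real obstacle beyond being careful about the conversion between $\langle \alpha_0, \alpha^{\vee}\rangle$ and $\langle \alpha, \alpha_0^{\vee}\rangle$, which is routine once one recalls that $\alpha_0$ is dominant in every simple type.
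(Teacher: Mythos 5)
Your proof is correct and is essentially the paper's own argument run in the contrapositive direction: both hinge on choosing a simple root $\beta$ with $\langle \beta, \alpha_0^{\vee}\rangle > 0$, applying minimality together with Proposition~\ref{RedLevi} to the Levi corresponding to $J = \Delta \setminus \{\beta\}$, and using that $\langle \alpha, \alpha_0^{\vee}\rangle \geq 0$ for every simple root $\alpha$ since $\alpha_0$ is dominant. No substantive difference.
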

\begin{proof} 
Assume that $\la \in X_1$ and $\mu \in X^+, $ with $\mu- \rho \uparrow \la-\rho$ and $a_{\mu}^{\la} \neq b_{\mu}^{\la}.$ Note that any semisimple root system contains at least one simple root $\beta$  with $\langle \beta, \alpha_0^{\vee} \rangle > 0.$ Choose a $\beta$ and set  $J = \Delta-\{\beta\}.$ It follows from the fact that $G$ is a minimal counterexample and Proposition \ref{RedLevi} that $\la - \mu \notin {\mathbb N}J.$  Hence, $\la - \mu$ is a sum of positive roots that contains at least one copy of $\beta$, thus $\langle \la - \mu, \alpha_0^{\vee} \rangle > 0$ and the assertion follows. 
\end{proof}

\subsection{} We will now focus on the case when $p=2h-3$ and show that $T((p-1)\rho + \la)$ is indecomposable as a $G_{1}T$-module whenever  $\la \in X_1$ and  $\langle \la ,\alpha_0^{\vee} \rangle \leq p(h-2)$.  Note that by linkage properties, $b_{\mu}^{\la} \neq 0$ if and only if $\mu - \rho \uparrow \la - \rho$ (see \cite[Theorem 1.1]{So20}).  Recall also that $\hQ_1((p-1)\rho + w_0\la)$ is necessarily a $G_1T$-summand of $T((p-1)\rho + \la)$ (the crux of the TMC being that it is the {\em only} summand).


\begin{prop}\label{p(h-2)} Let $p=2h-3.$ If  $\la \in X_1$ satisfies  $\langle \la ,\alpha_0^{\vee} \rangle \leq p(h-2),$ then $T((p-1)\rho + \la)$ is indecomposable as a $G_1T$-module. In particular, $a_{\mu}^{\la} = b_{\mu}^{\la}$ for all $\mu \in X^+.$ 
\end{prop}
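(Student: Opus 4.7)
Since $\hQ_1((p-1)\rho+w_0\la)$ is always a $G_1T$-summand of $T((p-1)\rho+\la)$ (by Donkin), $G_1T$-indecomposability is equivalent to the equality of their characters, hence by \cref{a=b} to $a_\mu^\la = b_\mu^\la$ for all $\mu \in X^+$; thus the ``In particular'' clause is automatic once indecomposability is shown, and the plan is to prove indecomposability by contradiction.

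First, I would pass to a minimal counterexample by induction on the rank of $G$, so that TMC can be assumed for all proper Levi subgroups. Under this assumption \cref{min:count} forces any $\mu$ witnessing $a_\mu^\la \neq b_\mu^\la$ to satisfy $\langle \mu,\alpha_0^\vee\rangle < \langle \la,\alpha_0^\vee\rangle$. Moreover, applying \cref{RedLevi} separately to $J = \Delta\setminus\{\beta\}$ for each $\beta \in \Delta$ (rather than for a single $\beta$, as is done in the proof of \cref{min:count}) strengthens this to the statement that $\la-\mu$ has full support in $\Delta$. The strong linkage principle then provides a descending chain of affine reflections $s_{\beta,np}\cdot$ from $\la-\rho$ down to $\mu-\rho$.

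The hard part will be the sharp numerical estimate. For each simple root $\alpha_i \in \Delta$, the full-support condition forces the reflection chain to produce a positive $\alpha_i$-coefficient in $\la-\mu$, and each reflection across a hyperplane $\langle\,\cdot\,+\rho,\beta^\vee\rangle = np$ contributes at least $p$ to the relevant weight displacements. Aggregating these contributions over $\Delta$ and pairing with $\alpha_0^\vee$, the goal is to deduce $\langle \la,\alpha_0^\vee\rangle > p(h-2)$ at $p=2h-3$, contradicting the hypothesis. This is exactly where Jantzen's argument for $p\geq 2h-2$ exploits one unit of slack at every crossing; with that slack exhausted at $p=2h-3$, the finer alcove-geometric techniques developed in \cite[Section~3]{BNPS22a} are required to push the estimate through. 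Everything else in the argument is formal; this numerical/alcove verification is the only genuinely delicate step and the place where the whole plan could break down.

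Once the contradiction is in hand, $a_\mu^\la = b_\mu^\la$ for all $\mu\in X^+$, and hence $T((p-1)\rho+\la)|_{G_1T} \cong \hQ_1((p-1)\rho+w_0\la)$ is $G_1T$-indecomposable.
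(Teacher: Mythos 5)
Your setup is fine as far as it goes: the equivalence of $G_1T$-indecomposability with $a_\mu^\la=b_\mu^\la$, the use of \cref{min:count}, and the observation that $\la-\mu$ must have full support are all correct (though the ``induction on rank'' is unnecessary: since $h_J<h$ for every proper Levi, $p=2h-3\geq 2h_J-2$, so [DTilt] for $L_J$ is already Jantzen's theorem, which is exactly how the paper gets a minimal counterexample). The genuine gap is the step you yourself flag: the proposed numerical argument --- aggregating ``at least $p$ per hyperplane crossing'' over the full support of $\la-\mu$ to force $\langle\la,\alpha_0^\vee\rangle> p(h-2)$ --- is not carried out, and it cannot work as a purely weight-combinatorial estimate. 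Full support only guarantees $\langle\la-\mu,\alpha_0^\vee\rangle>0$ (most simple roots pair to $0$ with $\alpha_0^\vee$), strong linkage does not force every simple root's contribution to be of size $p$, and more fundamentally the failure of $a_\mu^\la=b_\mu^\la$ at small primes (the known counterexamples to [DTilt]) shows that weight numerology alone can never rule out an offending $\mu$; some representation-theoretic input tied to $p=2h-3$ is indispensable.

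The paper closes this gap differently. For $\langle\la,\alpha_0^\vee\rangle<p(h-2)$ the statement is exactly \cite[Theorem 3.4.3]{BNPS22a}, so only the boundary case $\langle\la,\alpha_0^\vee\rangle=p(h-2)$ needs an argument. There one takes $\mu$ \emph{maximal} with $a_\mu^\la\neq b_\mu^\la$; by \cref{min:count}, $\langle\mu,\alpha_0^\vee\rangle<p(h-2)$. Writing $\mu=\mu_0+p\mu_1$, maximality produces a $G_1T$-summand $\hQ_1((p-1)\rho+w_0\mu_0)\otimes p\mu_1$ of $T((p-1)\rho+\la)$ with multiplicity $b_\mu^\la-a_\mu^\la$. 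The inequality $\langle\mu,\alpha_0^\vee\rangle<p(h-2)$ with $p=2h-3$ forces $\mu_1$, and every composition factor of $\Hom_{G_1}(L((p-1)\rho+w_0\mu_0),T((p-1)\rho+\la))^{(-1)}$, into the closure of the lowest alcove, whence this Hom-group is a completely reducible $G$-module; then \cite[Lemma 3.4.2]{BNPS22a} identifies $\hQ_1((p-1)\rho+w_0\mu_0)\otimes L(\mu_1)^{(1)}\cong T((p-1)\rho+\mu)$ and the argument of \cite[Proposition 5.6.2]{BNPS22a} upgrades these $G_1T$-summands to $G$-summands, contradicting the $G$-indecomposability of the tilting module $T((p-1)\rho+\la)$. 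So the contradiction comes from lowest-alcove semisimplicity and $G$-indecomposability of tilting modules, not from an estimate of the kind you propose; to complete your plan you would need to replace the ``delicate numerical step'' by this structural argument (or reprove the results of \cite[Section 3]{BNPS22a} yourself).
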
 

\begin{proof} Note that for the stronger condition $\langle \la ,\alpha_0^{\vee} \rangle < p(h-2)$ the statement of the proposition is simply a paraphrasing of \cite[Theorem 3.4.3]{BNPS22a}. We may therefore assume that 
 $\la \in X_1$ with  $\langle \la ,\alpha_0^{\vee} \rangle = p(h-2)$. 
 
Suppose that there exists $\mu \in X^+$ with $\mu- \rho \uparrow \la-\rho$ but $a_{\mu}^{\la} \neq b_{\mu}^{\la}.$   Assume further that $\mu$ is a maximal weight satisfying these conditions. Clearly  $\mu < \la.$ 
Note that $(G, 2h-3)$ would be a minimal counterexample for $p=2h-3,$ because $h_J< h$ for any proper Levi subgroup $L_J.$ Thus $p \geq 2h_J-2,$ thereby satisfying the Jantzen condition and implying that [DTilt]  holds for $L_J$.  It follows from Proposition \ref{min:count} that $\langle \mu, \alpha_0^{\vee} \rangle <  \langle \la, \alpha_0^{\vee} \rangle=p(h-2).$

Since $a_{\mu}^{\la} \neq b_{\mu}^{\la}$, the tilting module $T((p-1)\rho + \la)$ decomposes as a $G_1T$-module. 
Moreover, due to the maximality of $\mu$, it must have a $G_1T$-summand with highest weight $(p-1)\rho + \mu.$ If we express $\mu$ in the form $\mu = \mu_0 + p \mu_1,$ with $\mu_0 \in X_1$ and $\mu_1 \in X^+,$ then $T((p-1)\rho + \la)$ has  $\hQ_1((p-1)\rho+w_0\mu_0) \otimes p\mu_1$ as a $G_1T$-summand. The  multiplicity is given by $b^{\la}_{\mu}-  a^{\la}_{\mu}.$ 

One concludes from the maximality of $\mu$ that $\Hom_{G_1}(L((p-1)\rho+w_0\mu_0), T((p-1)\rho + \la))$ has $p\mu_1$ as a highest weight. From $p \langle \mu_1, \alpha_0^{\vee} \rangle \leq  \langle \mu, \alpha_0^{\vee} \rangle<p(h-2)$, one obtains that 
$\langle \mu_1+\rho, \alpha_0^{\vee} \rangle \leq (h-2) +(h-1)=p.$ That is, $\mu_1$ lies in the closure of the fundamental alcove.  Hence, for any simple composition factor $L(\s)$ of $\Hom_{G_1}(L((p-1)\rho+w_0\mu_0), T((p-1)\rho + \la))^{(-1)}$, the dominant weight $\s$ lies in the closure of the fundamental alcove.   As there are no extensions between such modules, 
 $\Hom_{G_1}(L((p-1)\rho+w_0\mu_0), T((p-1)\rho + \la))^{(-1)}$ is completely reducible as a $G$-module and 
$$\dim \Hom_{G}(L((p-1)\rho+w_0\mu_0)\otimes L(\mu_1)^{(1)}, T((p-1)\rho + \la))=b^{\la}_{\mu}-  a^{\la}_{\mu}.$$ Therefore,  $\hQ_1((p-1)\rho+w_0\mu_0)\otimes L(\mu_1)^{(1)}$ appears $b^{\la}_{\mu}-  a^{\la}_{\mu}$ many times as a $G_1T$-summand of $T((p-1)\rho + \la).$

Since $\langle \mu, \alpha_0^{\vee} \rangle<p(h-2)$ and $\langle \la, \alpha_0^{\vee} \rangle =p(h-2),$ it follows from  \cite[Lemma 3.4.2]{BNPS22a}  that $T((p-1)\rho + \mu) \cong \hQ_1((p-1)\rho+w_0\mu_0)\otimes L(\mu_1)^{(1)}$ as a $G_1T$-module and that $T((p-1)\rho + \mu)$ is the injective hull and projective cover of $L((p-1)\rho+w_0\mu_0)\otimes L(\mu_1)^{(1)}$ in the truncated category $\Mod((p-1)\rho + \la)$\footnote{For $\s \in X^+$, $\Mod(\s)$ denotes the category of all finite-dimensional $G$-modules whose highest weight is less than or equal to $\s$}. But now the argument given at the end of the proof of \cite[Proposition 5.6.2]{BNPS22a} shows that the $b^{\la}_{\mu}-  a^{\la}_{\mu}$ copies of  $T((p-1)\rho + \mu)$ are actually $G$-summands not just $G_1T$-summands of  $T((p-1)\rho + \la)$, a contradiction.

\end{proof}

\subsection{} We can now lower the bound on the Tilting Module Conjecture and verify the statements of Theorem~\ref{T:main}(a) and (b). 

\begin{theorem}\label{T:lowerboundtilting} Let $G$ be a simple algebraic group and $p\geq 2h-4$. Then the Tilting Module Conjecture holds. 
\end{theorem}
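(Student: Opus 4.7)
The bound $p\geq 2h-2$ is Jantzen's \cite{Jan80} classical result, so it suffices to handle the residual primes with $2h-4\leq p<2h-2$. Since $2h-4$ is even, the value $p=2h-4$ is prime only in the degenerate case $h=3$, $p=2$ (type $\mathrm{A}_2$), where [DTilt] holds for all primes. The only substantive new case is therefore $p=2h-3$.

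By \cref{a=b} it suffices to verify $a_{\mu}^{\la}=b_{\mu}^{\la}$ for every $\la\in X_1$ and every $\mu\in X^+$ with $\mu-\rho\uparrow\la-\rho$. The plan is to argue by minimal counterexample: assume $(G,p)$ is a minimal counterexample. \Cref{RedLevi} then rules out any counterexample pair $(\la,\mu)$ with $\la-\mu\in\mathbb{N}J$ for a proper $J\subsetneq\Delta$, and \cref{min:count} forces the strict inequality $\langle\mu,\alpha_0^\vee\rangle<\langle\la,\alpha_0^\vee\rangle$. In the first subcase $\langle\la,\alpha_0^\vee\rangle\leq p(h-2)$ we have $\langle\mu,\alpha_0^\vee\rangle<p(h-2)$, and \cref{p(h-2)} closes the argument immediately. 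What remains is the narrow residual window $p(h-2)<\langle\la,\alpha_0^\vee\rangle\leq(p-1)(h-1)$, which for $p=2h-3$ has width only $h-2$.

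In this residual window I would select $\mu$ maximal among the counterexample partners for $\la$ and extract, exactly as in the proof of \cref{p(h-2)}, a $G_1T$-summand of $T((p-1)\rho+\la)$ of the form $\hQ_1((p-1)\rho+w_0\mu_0)\otimes p\mu_1$ with multiplicity $b_{\mu}^{\la}-a_{\mu}^{\la}>0$, where $\mu=\mu_0+p\mu_1$ with $\mu_0\in X_1$ and $\mu_1\in X^+$. The goal is to run the final lifting argument of \cref{p(h-2)} verbatim: once $\mu_1+\rho$ is shown to lie in the closure of the fundamental alcove, the composition factors of $\Hom_{G_1}(L((p-1)\rho+w_0\mu_0),T((p-1)\rho+\la))^{(-1)}$ are pairwise $\Ext^1$-orthogonal, the Hom-space becomes semisimple as a $G$-module, and the $G_1T$-summand lifts to a $G$-summand by \cite[Lemma 3.4.2]{BNPS22a} together with the argument at the end of \cite[Proposition 5.6.2]{BNPS22a}, contradicting the indecomposability of $T((p-1)\rho+\la)$.

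\textbf{Main obstacle.} The nontrivial point is verifying the alcove bound $\langle\mu_1,\alpha_0^\vee\rangle\leq h-2$ in the residual window, which is equivalent to $\langle\mu,\alpha_0^\vee\rangle\leq p(h-2)$. Here one can no longer deduce this directly from \cref{min:count}, since $\langle\la,\alpha_0^\vee\rangle$ itself exceeds $p(h-2)$. The strategy is to combine the strong linkage $\mu-\rho\uparrow\la-\rho$ with the Levi-irreducibility condition $\la-\mu\notin\mathbb{N}J$ for all proper $J\subset\Delta$: the latter forces $\la-\mu$ to involve every simple root, so the reflection chain descending from $\la-\rho$ to $\mu-\rho$ must cross a wall attached to $\alpha_0$, which drops $\langle-,\alpha_0^\vee\rangle$ by at least $h-1$ per crossing. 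A careful bookkeeping of these drops within the width-$(h-2)$ residual window should push $\mu$ into the range $\langle\mu,\alpha_0^\vee\rangle\leq p(h-2)$, completing the argument and hence establishing \cref{T:lowerboundtilting}. I expect this alcove-chasing step (together with carefully checking the small rank-two exceptional data) to be the main technical hurdle.
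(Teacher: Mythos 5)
Your reductions are sound as far as they go: $p\geq 2h-2$ is Jantzen, $p=2h-4$ only occurs for type $A_2$ with $p=2$ (where [DTilt] is known, cf.\ \cite{Don17}), and for $p=2h-3$ the range $\langle\la,\alpha_0^{\vee}\rangle\leq p(h-2)$ is exactly what \cref{p(h-2)} covers (minimality of a putative counterexample being automatic here since $h_J<h$ gives $p\geq 2h_J-2$ for every proper Levi). The genuine gap is the residual window $p(h-2)<\langle\la,\alpha_0^{\vee}\rangle\leq (p-1)(h-1)$, which you only sketch. Two concrete problems. First, the inequality you identify as the main hurdle is not established: your mechanism --- that the chain realizing $\mu-\rho\uparrow\la-\rho$ must cross a wall attached to $\alpha_0$ and that each crossing drops $\langle\cdot,\alpha_0^{\vee}\rangle$ by at least $h-1$ --- is unjustified and false in general. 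The condition $\la-\mu\notin\mathbb{N}J$ for all proper $J$ only forces $\la-\mu$ to have full simple-root support; a strong linkage chain achieving this need not involve any $\alpha_0$-wall, and a single downward affine reflection $s_{\beta,mp}$ changes $\langle\cdot,\alpha_0^{\vee}\rangle$ by a multiple of $\langle\beta,\alpha_0^{\vee}\rangle$, which can be $0$, $1$, or $2$ per step, nowhere near $h-1$. (Also, $\langle\mu_1,\alpha_0^{\vee}\rangle\leq h-2$ is implied by, but not equivalent to, $\langle\mu,\alpha_0^{\vee}\rangle\leq p(h-2)$; in fact the alcove membership of $\mu_1$ already follows from \cref{min:count} in your window, so it is not the real obstruction.) Second, even granting your bound, the final lifting step invokes \cite[Lemma 3.4.2]{BNPS22a}, which the paper applies under the hypotheses $\langle\mu,\alpha_0^{\vee}\rangle<p(h-2)$ \emph{and} $\langle\la,\alpha_0^{\vee}\rangle=p(h-2)$; in your window $\langle\la,\alpha_0^{\vee}\rangle>p(h-2)$, the truncated category $\Mod((p-1)\rho+\la)$ is strictly larger, and the injective hull/projective cover statement you need is not covered by that lemma as stated. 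So the residual case remains open in your proposal.

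The paper disposes of this residual window by a completely different and much shorter observation: for $\la\in X_1$, $\langle\la,\alpha_0^{\vee}\rangle>p(h-2)$ holds precisely when $\nu:=(p-1)\rho+w_0\la$ lies in the interior of the lowest alcove. Choosing $\la'$ with $\langle\la',\alpha_0^{\vee}\rangle=p(h-2)$, the weight $\ga:=(p-1)\rho+w_0\la'$ lies on the upper wall of the lowest alcove, and the TMC holds for $\hQ_1(\ga)$ by the equality case of \cref{p(h-2)}. The translation principle \cite[II.11.10, II.E.11]{rags} then gives, as $G_1T$-modules,
$$T(2(p-1)\rho+w_0\nu)\cong T_{\ga}^{\nu}[T(2(p-1)\rho+w_0\ga)]\cong T_{\ga}^{\nu}[\hQ_1(\ga)]=\hQ_1(\nu),$$
which is exactly the TMC for every $\la$ in the residual window; no minimal-counterexample or alcove-chasing argument is needed there. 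If you want to complete your write-up, replace the speculative wall-crossing estimate with this translation argument.
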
 

\begin{proof} The case when $p\geq 2h-2$ is handled in \cite{Jan80} (see also \cite[Corollary 3.3.2]{BNPS22a} for an alternative proof). 

First assume that $p=2h-3.$ Note that, for  $\la \in X_1$, $\langle \la ,\alpha_0^{\vee} \rangle \leq p(h-2)$ if and only if $\ga := (p-1)\rho +w_0 \la$ is a $p$-restricted weight that is not contained in the interior of the lowest alcove. It follows from Proposition \ref{p(h-2)} that the TMC holds for all $\hQ_1(\ga)$ as long as $\ga$ is not in the interior of the lowest alcove.  However, one may clearly choose $\la \in X_1$ with $\langle \la ,\alpha_0^{\vee} \rangle = p(h-2)$. Then $\ga := (p-1)\rho +w_0 \la$ is a $p$-restricted weight on the upper wall of the lowest alcove (so the TMC holds for $\hQ_1(\ga)$). For any $\nu$ in the interior of the lowest alcove the translation principle   \cite[II.11.10, II.E.11]{rags} now says that one has the following isomorphisms as $G_{1}T$-modules: 
$$T(2(p-1)\rho+ w_0 \nu)\cong T_{\ga}^{\nu}[T(2(p-1)\rho+w_0\ga)]\cong T_{\ga}^{\nu}[\hQ_1(\ga)]= \hQ_1(\nu).$$
This completes the $p=2h-3$ case. 

The only situation in which we have $p = 2h - 4$ is when $p = 2$ and $h = 3$. That is, when the root system is of type $A_2$.  [DTilt]  is known to hold (cf. \cite{Don17}) in type $A_2$ (for all primes), and so we may claim that 
[DTilt]  holds for $p\geq 2h-4.$
\end{proof}


\section{Extensions}\label{S:Ext}

\subsection{} A key observation in \cite{BNPS22b} is that counterexamples to [DTilt] seem to be present whenever one can identify some $\la \in X_1$ for which $\opH^1(G_1,L(\la))^{(-1)}$ is not a tilting module over $G$.  More generally, one may consider the question of whether $\Ext_{G_1}^1(L(\la),L(\mu))^{(-1)}$ is tilting for $\la, \mu \in X_1$.  In Theorem~\ref{T:Ext} below, it is shown that, under the condition $p \geq 2h - 4$ where the TMC is known to hold, such Ext-groups are indeed tilting.

\subsection{Weights in $\operatorname{Ext}^{1}$} Combining work of Andersen \cite{And84} and Bendel, Nakano, and Pillen \cite{BNP04}, one can show that weights of $\Ext_{G_1}^1(L(\la),L(\mu))^{(-1)}$ are ``small.''  Indeed, the following statement is an improvement of \cite[Proposition 5.2]{BNP04} that gave a bound of $h - 1$ in all types. 

\begin{prop}\label{P:bound} Assume $p > 2$ if $\Phi$ has two root lengths and $p > 3$ if $\Phi$ is of type $G_2$.  Let $\la, \mu \in X_1$ and $\ga$ be a dominant weight of $\Ext_{G_1}^1(L(\la),L(\mu))^{(-1)}$. Then 
$$
\langle \ga, \a_0^{\vee}\rangle \leq
\begin{cases}
h - 2 &\text{ if } \Phi \neq A_1,\\
h - 1 &\text{ if } \Phi = A_1.
\end{cases}
$$
\end{prop}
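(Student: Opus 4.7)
The plan is to bound the dominant weights of $\Ext^{1}_{G_{1}}(L(\la),L(\mu))^{(-1)}$ by translating them into $G_{1}T$-composition factor data for an injective hull, refining the approach of \cite[Proposition 5.2]{BNP04}.  A dominant $T$-weight $\ga$ of the twisted Ext-group corresponds to a nonzero weight space of weight $p\ga$ in $\Ext^{1}_{G_{1}}(L(\la),L(\mu))$, which is the same as a nontrivial $G_{1}T$-extension
\begin{equation*}
0 \to L(\mu)\otimes p\ga \to E \to L(\la) \to 0.
\end{equation*}
Embedding $E$ into the $G_{1}T$-injective hull $\hQ_{1}(\mu)\otimes p\ga$ of its kernel exhibits the $G_{1}T$-simple of highest weight $\la - p\ga$ as a composition factor of $\hQ_{1}(\mu)$ strictly above the socle $L_{G_{1}T}(\mu)$.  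Thus $\la - p\ga$ lies in some baby Verma section $\hZ'_{1}(\eta)$ of a $\hZ'_{1}$-filtration of $\hQ_{1}(\mu)$.

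To extract the numerical bound, I would use the strong linkage principle to write $\eta = w\cdot \mu + p\tau$ for a suitable element $w$ of the affine Weyl group and $\tau\in X$, and then express $\la - p\ga = \eta - \beta$ for some $\beta\in\mathbb{N}\Phi^{+}$.  Pairing with $\alpha_{0}^{\vee}$ and using that both $\la$ and $\mu$ are $p$-restricted, so that $\langle \la,\alpha_{0}^{\vee}\rangle,\langle \mu,\alpha_{0}^{\vee}\rangle\leq p-1$, yields a linear inequality on $\langle \ga,\alpha_{0}^{\vee}\rangle$.  The prime hypotheses $p>2$ for two root lengths and $p>3$ for $G_{2}$ are needed so that the Andersen--Jantzen description of $H^{\ast}(G_{1},-)$ applies uniformly, restricting $w$ to short affine Weyl elements; feeding this into the inequality should reproduce the earlier bound $\langle \ga,\alpha_{0}^{\vee}\rangle\leq h-1$ of \cite{BNP04} as the coarse estimate.

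The main obstacle is the improvement from $h-1$ to $h-2$ outside type $A_{1}$.  Equality in the coarse bound should force $\beta=0$, so that $\la - p\ga = \eta$ exactly, and should pin down $w$ as the unique affine reflection across the wall $\langle\cdot+\rho,\alpha_{0}^{\vee}\rangle = p$.  In rank at least two, the remaining simple-root inequalities imposed by $\la,\mu\in X_{1}$ together with the dominance of $\ga$ should be incompatible with this extremal configuration, ruling out $\langle\ga,\alpha_0^\vee\rangle = h-1$; in type $A_{1}$ the only simple root coincides with $\alpha_{0}$, no additional constraint is available, and the weaker bound $h-1$ survives.  Carrying out this case analysis on the affine reflection, while keeping track of the short-root contributions guaranteed by the prime hypotheses, is where I expect the central computation of the proof to lie.
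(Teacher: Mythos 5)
Your approach is genuinely different from the paper's, and as sketched it has a gap at the decisive numerical step. The structural part --- realizing a dominant weight $\ga$ of $\Ext^1_{G_1}(L(\la),L(\mu))^{(-1)}$ as a nonsplit $G_1T$-extension and embedding it into $\hQ_1(\mu)\otimes p\ga$ --- is fine, but it only produces ``one-sided'' information, essentially the inequality $p\ga \leq 2(p-1)\rho + w_0\mu - \la$ from \cite[Proposition 5.2]{BNP04} (possibly refined by linkage data on the baby Verma factors). When you pair this with $\a_0^{\vee}$, the terms $\langle\la,\a_0^{\vee}\rangle$ and $\langle\mu,\a_0^{\vee}\rangle$ enter with the unhelpful sign and can only be discarded, giving $\langle\ga,\a_0^{\vee}\rangle \leq 2(p-1)(h-1)/p$, i.e.\ roughly $2(h-1)$ rather than $h-1$; the linkage of $\eta$ to $\mu$ does not obviously close this factor of two, and your sketch does not say how it would. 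The paper's proof requires a second, complementary inequality, namely Andersen's $p\ga \leq -w_0\la + \mu + \a$ for some simple root $\a$ \cite[Lemma 2.3]{And84}, which comes from comparing $\Ext^1_{G_1}(L(\la),L(\mu))$ with $\Ext^1_{G_1}(L(\la),H^0(\mu))$ and using computations of $\opH^1(G_1,H^0(\omega))$. Adding the two inequalities makes $\la$ and $\mu$ appear only in the combinations $\la + w_0\la$ and $\mu + w_0\mu$, both of which pair to zero with $\a_0^{\vee}$; that cancellation is the whole point, and nothing in your plan produces the Andersen-side inequality.

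Two further issues. First, the prime hypotheses do not enter where you place them: they are exactly what is needed so that Jantzen's computation of $\opH^1(G_1,H^0(\omega))$ \cite[Proposition 4.1]{Jan91} can substitute for the Andersen--Jantzen results (which require $p>h$) in justifying Andersen's lemma. In your framework, which relies only on the $\hZ_1'$-filtration of $\hQ_1(\mu)$ (available for all $p$), the hypotheses play no visible role --- a warning sign, since the bound is not expected to hold without them. Second, the improvement from $h-1$ to $h-2$ is not an extremal-configuration analysis: after adding the two inequalities and pairing with $\a_0^{\vee}$ one gets $2p\langle\ga,\a_0^{\vee}\rangle \leq 2p(h-1) - \langle 2\rho - \a,\a_0^{\vee}\rangle$, and the correction term $\langle 2\rho-\a,\a_0^{\vee}\rangle$ is strictly positive except in type $A_1$ (where $2\rho=\a$), so integrality gives $h-2$ immediately.
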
 

\begin{proof}  By \cite[Lemma 2.3]{And84}, there exists a simple root $\a$ such that
\begin{equation}\label{E:And}
p\ga \leq -w_0\la + \mu + \a.
\end{equation}
The statement in \cite{And84} assumed $p > h$ as the proof made use of a general cohomology result of Andersen and Jantzen \cite{AJ}.  For our purposes, we only need information on $\opH^1(G_1,H^0(\omega))$ for  $\omega \in X^+$ that can be obtained from \cite[Proposition 4.1]{Jan91} (with the given conditions on the prime $p$).    
From the proof of \cite[Proposition 5.2]{BNP04}, one also has (with no assumption on the prime)
\begin{equation}\label{E:BNP}
p\ga \leq 2(p-1)\rho + w_0\mu - \la.
\end{equation}
Adding \eqref{E:And} and \eqref{E:BNP} gives
\begin{equation}\label{E:combined}
2p\ga \leq 2(p-1)\rho + \mu + w_0\mu - w_0\la - \la + \a.
\end{equation}

For $\s \in X$, 
$$
\langle w_0\s,\a_0^{\vee}\rangle = \langle \s,w_0^{-1}(\a^{\vee}_0)\rangle = \langle \s,-\a_0^{\vee}\rangle = -\langle\s,\a_0^{\vee}\rangle.
$$
Hence, taking the inner product with $\a_0^{\vee}$ on both sides of \eqref{E:combined} yields
$$
2p\langle\ga,\a_0^{\vee}\rangle \leq 2(p-1)\langle\rho,\a_0^{\vee}\rangle + \langle\a,\a_0^{\vee}\rangle.
$$
Rewriting gives
\begin{equation}\label{E:a0}
2p\langle\ga,\a_0^{\vee}\rangle \leq 2p\langle\rho,\a_0^{\vee}\rangle - \langle 2\rho - \a,\a_0^{\vee}\rangle.
\end{equation}
Since $2\rho$ is the sum of all the positive roots, $\langle 2\rho - \a,\a_0^{\vee}\rangle > 0$ except in type $A_1$, where it equals zero.
We conclude that 
$$
\langle\ga,\a_0^{\vee}\rangle \leq \langle\rho,\a_0^{\vee}\rangle = h - 1,
$$
with equality holding only in type $A_1$, giving the claim.
\end{proof}

\subsection{Structure of $\operatorname{Ext}^{1}$} With the result in the previous section, one can now investigate the structure of  $\Ext_{G_1}^1(L(\la),L(\mu))^{-1}$ for 
 $\la, \mu \in X_1$ and verify statement (d) of Theorem~\ref{T:main}. 

\begin{theorem}\label{T:Ext} Let $G$ be a simple algebraic group, $p \geq 2h - 4$, and $\la, \mu \in X_1$.  Then $\Ext_{G_1}^1(L(\la),L(\mu))^{(-1)}$ is a tilting module and completely reducible.
\end{theorem}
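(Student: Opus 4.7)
The plan is to combine the weight bound of Proposition~\ref{P:bound} with the vanishing of extensions between tilting modules in order to show that $M := \Ext^1_{G_1}(L(\la),L(\mu))^{(-1)}$ decomposes as a direct sum of simple tilting modules.

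First I would verify that under $p \geq 2h-4$ the prime restrictions of Proposition~\ref{P:bound} are automatically in force (for instance $p \geq 5$ when $\Phi = B_n, C_n, F_4$ and $p \geq 11$ when $\Phi = G_2$), so that the proposition applies unconditionally in every type. It then bounds every dominant weight $\ga$ of $M$---and in particular the highest weight of every composition factor $L(\ga)$ of $M$---by $\langle \ga, \alpha_0^\vee \rangle \leq h-2$, with $\leq h-1$ in type $A_1$.

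The next step is to locate these weights geometrically. For $\Phi \neq A_1$ and $p \geq 2h-3$, the bound yields
\[
\langle \ga + \rho, \alpha_0^\vee \rangle \leq (h-2) + (h-1) = 2h-3 \leq p,
\]
so $\ga$ lies in the closure of the lowest $p$-alcove, and the analogous estimate handles $A_1$ for any $p \geq 2$. The only remaining case is $p = 2h-4$, which forces $(\Phi,p) = (A_2,2)$; there the bound restricts $\ga$ to $\{0,\omega_1,\omega_2\}$, and each such simple module is trivial or minuscule and hence coincides with the indecomposable tilting module $T(\ga)$. In every situation one therefore obtains $L(\ga) = T(\ga)$ for every composition factor of $M$.

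To close, I would invoke $\Ext^1_G(T(\ga),T(\ga')) = 0$ for any pair of tilting modules to conclude that the composition factors of $M$ are pairwise Ext-orthogonal; induction on composition length then forces $M$ to split as a direct sum of the simple tilting modules $L(\ga)$, giving both complete reducibility and the tilting property. The main obstacle I anticipate is the boundary case $p = 2h-4$, where the alcove estimate alone is too weak; the resolution exploits the fortunate fact that in the unique relevant instance $(\Phi,p) = (A_2,2)$ the candidate weights are already minuscule or zero and therefore still yield tilting simples.
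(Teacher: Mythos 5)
Your argument is correct, and for $p \geq 2h-3$ it is essentially the paper's proof: apply Proposition~\ref{P:bound}, observe that every composition factor $L(\gamma)$ has $\langle \gamma+\rho,\alpha_0^{\vee}\rangle \leq 2h-3\leq p$ (resp.\ $2h-2\leq p$ in type $A_1$), hence lies in the closure of the lowest alcove and equals $T(\gamma)$, and then kill all extensions between the factors. Two points differ. First, for the Ext-vanishing the paper invokes the linkage principle for weights in the closed fundamental alcove, whereas you use $\Ext^1_G(T,T')=0$ for tilting modules together with induction on composition length; both work in the main case, but your choice is actually forced in your treatment of the boundary case, since for $(\Phi,p)=(A_2,2)$ the weights $\omega_1,\omega_2$ do \emph{not} lie in the closure of the lowest alcove. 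Second, and more substantively, the paper disposes of the case $p=2h-4$ (i.e.\ $(A_2,2)$) by citing Yehia's explicit computation of $\Ext^1_{G_1}(L(\lambda),L(\mu))$ in type $A_2$, while you handle it uniformly: Proposition~\ref{P:bound} carries no prime restriction in simply-laced type, so it applies at $p=2$ and confines the dominant weights to $\{0,\omega_1,\omega_2\}$, which are trivial or minuscule and hence give simple tilting composition factors. Your route is more self-contained (no appeal to the explicit $A_2$ Ext-tables) at the cost of leaning on the validity of Proposition~\ref{P:bound} at $p=2$, which the paper's statement does permit. I see no gap.
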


\begin{proof} Assume first that $p \geq 2h - 3$.  Observe that $p$ satisfies the conditions of Proposition \ref{P:bound}.   Assume further that $\Phi$ is not of type $A_1$.  Let $L(\ga)$ be a composition factor of $\Ext_{G_1}^1(L(\la),L(\mu))^{-1}$.  By Proposition \ref{P:bound}, 
$$
\langle \ga + \rho,\a_0^{\vee}\rangle = \langle\ga,\a_0^{\vee}\rangle + \langle\rho,\a_0^{\vee}\rangle \leq h - 2 + h -1 = 2h -3 \leq p.
$$
Therefore, $\ga$ lies in the closure of the fundamental alcove and
$$
L(\ga) = \nabla(\ga) = \Delta(\ga) = T(\ga).
$$
Furthermore, there are no extensions between such a pair of modules (cf.~\cite[Proposition II.4.13]{rags}). Hence, $\Ext_{G_1}^1(L(\la),L(\mu))^{(-1)}$ is a direct sum of indecomposable tilting modules, each of which is simple.

Consider the case that $\Phi$ is of type $A_1$ (i.e., $G$ is $SL_2$).  By \cite[Proposition 5.2]{BNP04}, one has
$$
\langle \ga + \rho,\a_0^{\vee}\rangle = \langle\ga,\a_0^{\vee}\rangle + \langle\rho,\a_0^{\vee}\rangle \leq h - 1 + h -1 = 2h -2 = 2 \leq p.
$$
So the result follows as above. One could also use explicit knowledge of the Ext-groups to obtain the conclusion.  

As noted in the proof of Theorem~\ref{T:lowerboundtilting}, if $p = 2h - 4$, then $\Phi = A_2$ and $p = 2$. For type $A_2$, Yehia \cite[Proposition 3.3.2]{Y82} computed $\Ext_{G_1}^1(L(\la),L(\mu))$ in all primes, from which the statement of the theorem follows (cf. \cite[Corollary 3.3.4]{Y82}). 

\end{proof} 

\section{Rank $2$ Analysis} \label{S:Rank2}

\subsection{} The goal of this section is to verify the statements in Theorem~\ref{T:main2}. Part (d) will be verified in Section~\ref{SS:ExtRank2}. We provide details and references for parts (a)-(c) below. 

Part (a) follows from \cite[Theorem 1.1.1]{BNPS22a} which verifies [DTilt] for all rank $2$ cases aside from $\Phi=\rm{G}_{2}$ and $p=2$. In the latter case, it was shown that there is a counterexample to 
[DTilt], see \cite[Theorem 4.4.1]{BNPS20}. Part (b) follows from part (a), although [HVQuest] is still unresolved for the case when $\Phi=\rm{G}_{2}$ and $p=2$. 

For part (c), Kildetoft and Nakano verified [DFilt $\Rightarrow$] for the rank $2$ cases except when $\Phi=\rm{G}_{2}$ and $p=7$ (see \cite[Section 8]{KN15}). This case was later verified by the authors in \cite[Theorem 4.4.1]{BNPS19}. 

\subsection{Structure of $\operatorname{Ext}^{1}$ in Rank $2$}\label{SS:ExtRank2}

In this subsection we justify statement (d) of Theorem~\ref{T:main2}.

\begin{theorem}\label{T:ExtRank2} Let $G$ be a simple algebraic group with underlying root system of rank $2$, and let $p>2$ if $\Phi$ is of type $G_2$.  Then $\Ext_{G_1}^1(L(\la),L(\mu))^{(-1)}$ is a tilting module and completely reducible for all $\la, \mu \in X_1$.
\end{theorem}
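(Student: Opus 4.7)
The plan is to reduce to the small-prime residual cases not already covered by Theorem~\ref{T:Ext}, and to dispatch each using explicit data. Since the rank-$2$ simple root systems $A_2$, $B_2$, and $G_2$ have Coxeter numbers $h = 3, 4, 6$, the bound $p \geq 2h-4$ of Theorem~\ref{T:Ext} already handles every prime for $A_2$, every $p \geq 5$ for $B_2$, and every $p \geq 11$ for $G_2$. Under the standing assumption $p > 2$ in type $G_2$, the remaining subcases are $\Phi = B_2$ with $p \in \{2,3\}$ and $\Phi = G_2$ with $p \in \{3,5,7\}$.

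For the subcases in which the hypothesis of Proposition~\ref{P:bound} still holds---namely $B_2$ at $p = 3$ and $G_2$ at $p = 5, 7$---every dominant weight $\ga$ appearing in $\Ext^1_{G_1}(L(\la),L(\mu))^{(-1)}$ satisfies $\langle \ga, \a_0^\vee \rangle \leq h - 2$. Although this does not quite put $\ga$ in the closure of the fundamental alcove (since $2h-3 > p$ in these subcases), the set of eligible dominant weights is small and finite. I would enumerate these and, using the explicit alcove geometry of $B_2$ or $G_2$, verify in each case that $L(\ga) = T(\ga)$ and that $\Ext^1_G$ vanishes among such factors, so that the decomposition into a direct sum of simple tilting modules follows exactly as in the proof of Theorem~\ref{T:Ext}.

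The low-characteristic subcases $B_2$ at $p = 2$ and $G_2$ at $p = 3$ fall outside the hypothesis of Proposition~\ref{P:bound}, and the main obstacle lies with $G_2$ at $p = 3$, where the alcove geometry is most intricate and no uniform weight bound is available. Here my plan is to invoke explicit determinations of $\Ext^1_{G_1}(L(\la),L(\mu))$ between restricted simples from the authors' prior work on rank-$2$ groups, in particular \cite{BNPS22a}. The restricted region is small (four weights for $B_2$ at $p = 2$; nine weights for $G_2$ at $p = 3$), so a direct tabulation suffices: one reads off each composition factor of the $(-1)$-twist and checks that it corresponds to a simple tilting module, giving both the tilting property and complete reducibility.
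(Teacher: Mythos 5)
Your reduction and overall strategy match the paper's: apply Theorem~\ref{T:Ext} to leave only $B_2$ with $p\in\{2,3\}$ and $G_2$ with $p\in\{3,5,7\}$, use the weight bound of Proposition~\ref{P:bound} plus explicit data where it applies, and fall back on explicit $G_1$-extension computations in the very small characteristics. However, there is a concrete gap in your treatment of $G_2$ at $p=7$. There the eligible dominant weights with $\langle\ga,\a_0^{\vee}\rangle\leq h-2=4$ are $0$, $\omega_1$, $\omega_2$, and $2\omega_1$, and your plan is to ``verify in each case that $L(\ga)=T(\ga)$.'' This verification fails: at $p=7$ one has $L(2\omega_1)\neq\nabla(2\omega_1)$, so $L(2\omega_1)$ is not tilting, and the weight bound alone cannot finish the argument. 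What is actually needed (and what the paper does) is a finer argument showing that $2\omega_1$ never occurs as a composition factor of $\Ext^1_{G_1}(L(\la),L(\mu))^{(-1)}$ in this case; the paper gets this from \cite[Remark 8.10]{BNPS22a} combined with Lin's tables of $G_1$-extensions \cite{Lin}, concluding that only $k$, $L(\omega_1)$, and $L(\omega_2)$ appear. Without some such exclusion step your proof of the $p=7$ case does not go through. Note also that $G_2$ at $p=3$ likewise lies outside the hypotheses of Proposition~\ref{P:bound} (which needs $p>3$ in type $G_2$), so, as you anticipated, it must be handled by explicit data.

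A secondary issue is sourcing: for $B_2$ at $p=2$ and $G_2$ at $p=3$ you propose to read off the restricted $\Ext^1_{G_1}$ groups from \cite{BNPS22a}, but the explicit determinations used in the paper are Sin's computations \cite{Sin94} (pp.~1019 and 1022), which show that only $k$ and $L(\omega_2)=\nabla(\omega_2)$ (resp.\ $k$ and $L(2\omega_1)=\nabla(2\omega_1)$) occur; \cite{BNPS22a} is not where these tables live. Your tabulation plan is sound in spirit, but as written it rests on a reference that does not supply the needed data, and for $B_2$ at $p=3$ you should also cite something like L\"ubeck's tables or the Jantzen sum formula to justify $L(\ga)=\nabla(\ga)$ for $\ga\in\{0,\omega_1,\omega_2,2\omega_2\}$, as the paper does.
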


\begin{proof}
Applying Theorem~\ref{T:Ext}, we need only check the primes smaller than $2h-4$.

Type $A_2$  - Here $h=3$, so $2h-4=2$, and there is nothing to check. We note, however, that the proof of Theorem~\ref{T:Ext} handled this exact case by citing the work of Yehia \cite{Y82}.

Type $B_2$ - Here $h=4$, so $2h-4=4$, thus we must deal with $p=2,3$.  For $p = 2$, this follows from the computations in \cite[p. 1019]{Sin94}.   The only modules that arise in $\Ext_{G_1}^1(L(\la),L(\mu))^{(-1)}$ are $k$ and $L(\omega_2) = \nabla(\omega_2)$.  As both modules are tilting and simple, any module made up of such factors is completely reducible. Note that Sin is working with type $C_2$. 

For $p = 3$, we first apply Proposition \ref{P:bound} to see that if $\ga$ is a weight of $\Ext_{G_1}^1(L(\la),L(\mu))^{(-1)}$ then $\langle\ga,\a_0^{\vee}\rangle \leq h - 2 = 2$.  The only possible dominant weights that satisfy this bound are $0$, $\omega_1$, $\omega_2$, and $2\omega_2$.  In all cases, $L(\ga) = \nabla(\ga),$ see \cite{L}. This can also be verified via the sum formula of the Jantzen filtration \cite[Prop. II.8.19]{rags}.

Type $G_2$ - Here $h=6$, so $2h-4=8$, and we must check $p = 3,5,7$.  (The result is false for $p=2$, see \cite[p. 2633]{DS} and also \cite{Jan91}.)  
According to \ref{P:bound} one has $\langle\ga,\a_0^{\vee}\rangle \leq h - 2 = 4$. The options for $\ga$ are $0$, $\omega_1$, $\omega_2$, and $2\omega_1$.
For $p=3$, we find from \cite[p. 1022]{Sin94} that the only modules that arise in $\Ext_{G_1}^1(L(\la),L(\mu))^{(-1)}$ are $k$ and $L(2\omega_1) = \nabla(2\omega_1)$.  
For $p=5,$
the tables in \cite[\S 18]{Hum06} show that $L(\ga) = \nabla(\ga)$ for all those $\ga$.  
For $p = 7$, there is only one potential issue: $L(2\omega_1) \neq \nabla(2\omega_1)$.  
However, it follows from \cite[Remark 8.10]{BNPS22a} and the table in 
\cite[Section 4.2, Figure 3]{Lin}  
that only $k$, $L(\omega_1)$ and $L(\omega_2)$ appear as composition factors of 
$\Ext_{G_1}^1(L(\la),L(\mu))^{(-1)}.$ Moreover, all of these are tilting and appear as summands.  

\end{proof}

\providecommand{\bysame}{\leavevmode\hbox
to3em{\hrulefill}\thinspace}

\end{document}